\newcolumntype{L}{>{$}l<{$}}
\def\?[#1]{\textbf{[#1]}\marginpar{\Large{\textbf{??}}}}
\newtheorem{thm}{Theorem}
\newtheorem{prop}{Proposition}
\newtheorem{lem}[prop]{Lemma}
\newtheorem{cor}[prop]{Corollary}
\newtheorem{rem}[prop]{Remark}
\numberwithin{equation}{section}
\numberwithin{prop}{section}
\theoremstyle{definition}
\renewcommand{\Re}{\mathop{\rm Re}\nolimits}
\DeclareMathOperator{\Vol}{Vol}
\newcommand{\Ocal}{{\mathcal O}}
\newcommand{\CC}{{\mathbb C}}
\newcommand{\id}{{\rm id}}
\newcommand{\R}{\mathbb{R}}
\newcommand{\C}{\mathbb{C}}
\newcommand{\Lg}{\mathfrak{g}}
\newcommand{\Lk}{\mathfrak{k}}
\newcommand{\Lp}{\mathfrak{p}}
\newcommand{\Lh}{\mathfrak{h}}
\newcommand{\Ls}{\mathfrak{s}}
\newcommand{\Diff}{\mathrm{Diff}}
\newcommand{\Sym}{\mathrm{Sym}^2}
\begin{document}
\title[Spectral asymptotics for kinetic Brownian motion]{Spectral asymptotics for kinetic Brownian motion on locally symmetric spaces}

\author{Qiuyu Ren}
\email{qiuyu\_ren@berkeley.edu}
\address{Department of Mathematics, Evans Hall, University of California,
Berkeley, CA 94720, USA}

\author{Zhongkai Tao}
\email{ztao@math.berkeley.edu}
\address{Department of Mathematics, Evans Hall, University of California,
Berkeley, CA 94720, USA}

\begin{abstract}
We prove the strong convergence of the spectrum of the kinetic Brownian motion to the spectrum of base Laplacian for a large class of compact locally Riemannian homogeneous spaces, in particular all compact locally symmetric spaces. This generalizes recent work of Kolb--Weich--Wolf~\cite{Kolb2022} on constant curvature surfaces.
\end{abstract}

\maketitle
\section{Introduction}
Kinetic Brownian motion is a stochastic process that interpolates between the geodesic flow and Brownian motion. It was studied by several authors ~\cite{FrLe07}~\cite{GS13}~\cite{ABT15}~\cite{Li16} recently from the stochastic point of view. In this paper we focus on the spectral theory and prove the convergence of spectrum of the kinetic Brownian motion to the base Laplacian for a family of locally Riemannian homogeneous spaces. For additional references and background, see ~\cite{Kolb2022}.

More precisely, let $(M,g)$ be a closed Riemannian manifold of dimension $n\geq 2$, and let $X$ be the geodesic vector field on the unit tangent bundle $SM$. Let $\Delta_M$ be the Laplacian on $M$, and $\Delta_V$ be the vertical Laplacian on $SM$, namely $(\Delta_Vf)|_{S_pM}=\Delta_{S_pM}(f|_{S_pM})$ for every $p\in M$ (Our sign convention is that Laplacians have nonnegative eigenvalues). For $\gamma\in(0,\infty)$, define an operator \begin{align}P_\gamma=-\gamma X+c_n\gamma^2\Delta_{V}\end{align} on $SM$, where $c_n=1/n(n-1)$. 

We consider the spectrum of the operator $P_\gamma:D(P_\gamma)=\{u\in L^2(SM): P_\gamma u\in L^2\}\to L^2(SM)$, which we denote by $\sigma(P_\gamma)$. By hypoellipticity, $P_\gamma$ has discrete spectrum with finite multiplicity (see e.g. ~\cite[Proposition 2.1]{Kolb2022}).
We prove the following Theorem.

\begin{thm}\label{thm:Conv}
Suppose $M$ is a closed locally symmetric space. On any bounded open set $U\Subset \mathbb{C}$, we have
\begin{align}\label{spec_conv}
    \sigma(P_\gamma)\cap U\to \sigma(\Delta_M)\cap U, \quad \gamma\to\infty
\end{align}
with multiplicities. Moreover, in $B(L^2(SM))$ topology, we have
\begin{align}\label{res_conv}
    (P_\gamma-\lambda)^{-1}\to (\Delta_M-\lambda)^{-1},\quad \gamma\to \infty
\end{align}
uniformly for $\lambda\in U\Subset\mathbb{C}\setminus \sigma(\Delta_M)$. 
\end{thm}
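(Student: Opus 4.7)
The plan is to treat $P_\gamma$ for $\gamma\to\infty$ as a singular perturbation of the fiberwise elliptic operator $c_n\gamma^2\Delta_V$ and to extract an effective operator on the base via a Feshbach--Grushin reduction. Let $\Pi_0$ be the orthogonal projection in $L^2(SM)$ onto $\ker\Delta_V=\pi^*L^2(M)$ and set $\Pi_\perp:=1-\Pi_0$. Two elementary observations set up the calculation. First, $X\pi^*f(p,v)=df_p(v)$ is fiberwise a degree-one spherical harmonic, so $X$ maps $\Pi_0L^2$ into the first vertical eigenspace $\Hcal_1$ (with $\Delta_V$-eigenvalue $n-1$); in particular $\Pi_0 X\Pi_0=0$. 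Second, averaging $X^2\pi^*f(p,v)=\Hess f|_p(v,v)$ over each unit sphere gives $\Pi_0 X^2\Pi_0=-\tfrac{1}{n}\Delta_M$. Combined with the normalization $c_n=\tfrac{1}{n(n-1)}$, these identities are exactly what makes formal second-order perturbation theory produce $\Delta_M$ as the effective operator on $L^2(M)$.

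Relative to $L^2(SM)=\Pi_0L^2\oplus\Pi_\perp L^2$, one has
\begin{equation*}
P_\gamma-\lambda=\begin{pmatrix}-\lambda & -\gamma\,\Pi_0 X\Pi_\perp\\ -\gamma\,X\Pi_0 & A_\gamma(\lambda)\end{pmatrix},\qquad A_\gamma(\lambda):=\Pi_\perp(P_\gamma-\lambda)\Pi_\perp.
\end{equation*}
The main analytic step is to prove, uniformly for $\lambda$ in a bounded set and for all $\gamma$ large, that $A_\gamma(\lambda)$ is invertible on $\Pi_\perp L^2$ with
\begin{equation*}
A_\gamma(\lambda)^{-1}=\tfrac{1}{c_n\gamma^2}\Delta_V^{-1}\Pi_\perp+o_{B(L^2)}(\gamma^{-2}),
\end{equation*}
using the uniform gap $\Delta_V\ge n-1$ on $\Pi_\perp L^2$ together with hypoelliptic control of $-\gamma X\Pi_\perp$ against $c_n\gamma^2\Delta_V\Pi_\perp$. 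Given this, the Schur complement
\begin{equation*}
E_\gamma(\lambda):=-\lambda-\gamma^2\,\Pi_0 X\,A_\gamma(\lambda)^{-1}\,X\Pi_0
\end{equation*}
evaluates, using $\Delta_V^{-1}X\Pi_0=\tfrac{1}{n-1}X\Pi_0$ and $\Pi_0 X^2\Pi_0=-\tfrac{1}{n}\Delta_M$, to $E_\gamma(\lambda)=\Delta_M-\lambda+o_{B(L^2(M))}(1)$. Inverting the block matrix then yields \eqref{res_conv} uniformly for $\lambda$ in compact subsets of $\C\setminus\sigma(\Delta_M)$, while \eqref{spec_conv} with multiplicities follows by a standard contour argument: for each $\mu\in\sigma(\Delta_M)\cap U$, enclose $\mu$ in a small circle $\Gamma$ disjoint from the rest of $\sigma(\Delta_M)$ and use the uniform resolvent convergence on $\Gamma$ to conclude that $\tfrac{1}{2\pi i}\oint_\Gamma(\lambda-P_\gamma)^{-1}\,d\lambda$ converges in norm to the projector onto the $\mu$-eigenspace of $\Delta_M$, so in particular these projectors have the same rank for all large $\gamma$.

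The hard part is the uniform resolvent estimate for $A_\gamma(\lambda)$. Although $c_n\gamma^2\Delta_V$ provides a lower bound of size $\gamma^2$ on $\Pi_\perp L^2$, the vector field $X$ is unbounded and couples different vertical modes, so one needs a quantitative hypoellipticity estimate controlling $X$ in terms of $\Delta_V$. The natural strategy is to decompose $\Pi_\perp L^2(SM)=\bigoplus_{k\ge 1}\Hcal_k$ into vertical Laplacian eigenspaces and verify that $X$ acts as raising plus lowering between consecutive $\Hcal_k$'s with coefficient operators that are uniformly bounded, in a suitable sense, over the base. This is precisely where the locally symmetric hypothesis enters: writing $M=\Gamma\backslash G/K$, each $\Hcal_k$ is identified with $L^2$-sections of a homogeneous vector bundle associated to a specific $K$-type, and the required structure of $X$ follows from commutation relations in the Cartan decomposition $\Lg=\Lk\oplus\Lp$. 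The constant curvature surface case $G=\mathrm{PSL}(2,\R)$ is treated in \cite{Kolb2022} using $\mathrm{SL}(2,\R)$ representation theory; the general case amounts to extending their harmonic-analytic resolvent estimates to an arbitrary symmetric pair.
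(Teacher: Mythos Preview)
Your Grushin/Schur-complement setup and the identities $\Pi_0 X\Pi_0=0$, $\Pi_0 X^2\Pi_0=-\tfrac1n\Delta_M$ are exactly what the paper uses. The gap is in the claimed operator-norm asymptotics: neither $A_\gamma(\lambda)^{-1}=\tfrac{1}{c_n\gamma^2}\Delta_V^{-1}\Pi_\perp+o_{B(L^2)}(\gamma^{-2})$ nor $E_\gamma(\lambda)=\Delta_M-\lambda+o_{B(L^2(M))}(1)$ holds globally. Write $h=\gamma^{-1}$, $Q_0:=\Pi_\perp(c_n\Delta_V-hX-h^2\lambda)\Pi_\perp$, and $\Pi_k$ for the projection onto $\Hcal_k$. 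The $\Delta_V$-gap together with anti-self-adjointness of $X$ does give $\|Q_0^{-1}\|_{L^2\to L^2}\le 2/c_n$; but the resolvent identity produces an error term containing $\Pi_1 Q_0^{-1}\,h\,\Pi_2 X\Pi_1$, and $\Pi_2 X\Pi_1$ is a first-order \emph{horizontal} differential operator, hence unbounded on $L^2$. Multiplying it by $h$ does not make it small in operator norm (already on the flat torus, $\Pi_2 X\Pi_1$ applied to $e^{i\xi\cdot x}v_j\in\Hcal_1$ has $L^2$-norm of order $|\xi|$). So the raising/lowering picture you describe is correct, but the coefficient operators are not ``bounded over the base'' in any sense strong enough to validate your expansion; no hypoelliptic estimate will turn a genuinely horizontal derivative into something controlled by $\Delta_V$ alone.

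The paper's remedy is precisely the construction you gesture at in your last paragraph but do not carry out: a Casimir-type operator $\Omega$ on $SM$ commuting with $X$, $\Delta_V$, $\Delta$ and having elliptic horizontal symbol. One then decomposes $L^2(SM)=\bigoplus_\eta V_\eta$ into $\Omega$-eigenspaces, each preserved by $P_\gamma$, with every $V_{\eta,k}:=V_\eta\cap\Hcal_k$ finite-dimensional. On a fixed $V_\eta$ your Grushin argument goes through, because $\Pi_2 X\Pi_1$ now maps between finite-dimensional spaces and is automatically bounded (with an $\eta$-dependent bound), yielding $\sigma(P_\gamma|_{V_\eta})\to\sigma(\Delta_M|_{V_{\eta,0}})$ and resolvent convergence on $V_\eta$. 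The passage from this $\eta$-wise convergence to the global statements \eqref{spec_conv}--\eqref{res_conv} is a separate, non-formal step: one shows by a contradiction argument (using the horizontal ellipticity of $\Omega$ to bound $\|\Pi_1 X\Pi_0 u\|^2\gtrsim|\eta|\,\|\Pi_0 u\|^2$ on $V_\eta$) that for $|\eta|$ and $\gamma$ both large, $P_\gamma|_{V_\eta}$ has no spectrum in any fixed bounded set and uniformly small resolvent there. Without $\Omega$ or an equivalent substitute, neither the blockwise convergence nor the large-$\eta$ uniformity is available, and the argument does not close.
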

In fact, later we will prove the theorem for a more general class of Riemannian manifolds (see Theorem~\ref{thm:conv_refined}).

Here, the resolvant $(\Delta_M-\lambda)^{-1}$ in the statement is a priori only defined on $L^2(M)$, but $L^2(M)\subset L^2(SM)$ naturally by pullback and we extend $(\Delta_M-\lambda)^{-1}$ to an operator on $L^2(SM)$ by assigning zero on $L^2(M)^\perp$. This will be explained again in Section~\ref{sec:decomp}.


Drouot~\cite{Dr17} and Smith~\cite{Sm20} studied the limit of $P_\gamma$ when $\gamma\to 0$ for Anosov geodesic flows using semiclassical analysis. The operator $P_\gamma$ is an analogy to the hypoelliptic Laplacian $L_\gamma$ introduced by Bismut~\cite{bismut2005hypoelliptic}. The main analytic difference is that $P_\gamma$ is defined on the unit tangent bundle $SM$ (or the unit cotangent bundle if we want) while $L_\gamma$ is defined on the cotangent bundle $T^*M$. Bismut--Lebeau~\cite{BL08} proved a statement similar to Theorem~\ref{thm:Conv} for $L_\gamma$ in the limit $\gamma\to \infty$ for arbitrary closed manifolds $M$. Bismut~\cite{bismut2011hypoelliptic} also studied the limit of $L_\gamma$\footnote{In fact, the hypoelliptic Laplacian considered here is slightly different than the previous one.} when $\gamma\to 0$ for locally symmetric spaces $M$ of noncompact type and obtained formulas for orbit integrals. This was used by Shen~\cite{shen2017analytic} to give a proof of Fried's conjecture for locally symmetric spaces. See Shen ~\cite{shen2021analytic} for a survey on Fried's conjecture. As noticed in Kolb--Weich--Wolf~\cite{Kolb2022}, it seems possible to adapt the method of ~\cite{BL08} to $P_\gamma$ to study the $\gamma\to\infty$ limit. In this paper, however, instead of attacking the general case following this line, we give a simple proof in the locally symmetric case by exploiting the symmetry following the idea of ~\cite{KWW2019}~\cite{Kolb2022}.
\begin{table}[h]
\begin{tabular}{|c|c|c|}\hline
 &$\gamma\to0$ &$\gamma\to\infty$ \\\hline
$P_\gamma$ &negatively curved &(slightly more general than) locally symmetric \\\hline
$L_\gamma$ &locally symmetric &no condition \\\hline
\end{tabular}
\caption{Condition on $M$ for which we have a control of the limit}
\end{table}

Kolb--Weich--Wolf~\cite{Kolb2022} showed that the spectrum of $P_\gamma$ converges to the spectrum of the base Laplacian $\Delta_M$ in a weak sense, when $M$ is a constant curvature compact surface. While we follow the same idea of  decomposing $L^2(SM)$ into Casimir eigenspaces, our work is a generalization of theirs in two aspects. First, we extend the statement to one where $M$ is in a large family of Riemannian homogeneous spaces including closed locally symmetric spaces. This is achieved by a general construction of a Casimir operator in Proposition \ref{prop:Cas}. Second, we improve the weak convergence of spectrum to the convergence of spectrum in locally compact topology, and we also prove the resolvent convergence. In particular, we use a Grushin problem instead of Kato's perturbation theory, which gives us more precise information about the spectrum and resolvant.

The structure of the paper is as follows. In Section~\ref{sec2} we exploit the symmetry on $M$ to construct an operator $\Omega$ on $SM$, called a Casimir operator, which commutes with all operators on $SM$ we will be concerned with. In Section~\ref{sec3}, we prove Theorem~\ref{thm:Conv} using a Grushin problem on each eigenspace of the Casimir operator $\Omega$. The stronger convergence is proved by showing that there are only finitely many eigenspaces of the Casimir operator that would matter. This is proved by a contradiction argument, with a careful analysis of projections to spherical harmonics.

\subsection*{Acknowledgement}
We would like to thank Alexis Drouot and Maciej Zworski for motivating this project and many helpful discussions. ZT was partially supported by
National Science Foundation under the grant DMS-1901462 and by Simons
Targeted Grant Award No. 896630.

\section{A Casimir operator on \texorpdfstring{$SM$}{SM}}\label{sec2}
The goal of this section is to construct a certain second order differential operator $\Omega$ on the unit tangent bundle $SM$ of $M$ where $M$ is Riemmannian homogeneous with some extra properties. The exact statement is Proposition~\ref{prop:Cas}. All constructions and proofs relevant to this paper will be given in Section~\ref{sec2.1}. Section~\ref{sec2.2} exploits the construction in more detail in the special case when $M$ is locally isotropic by giving an alternative more explicit construction.

\subsection{The construction}\label{sec2.1}
We first recall and fix the use of some terminologies. A \textit{Riemannian homogeneous space} is a Riemannian manifold $N$ together with the action of a Lie group $G$ that is effective, transitive, and isometric. We usually write $N=G/K$ where $K$ is the isotopy group of a chosen point on $N$. A \textit{(complete) locally Riemannian homogeneous space} is a smooth quotient $M=\Gamma\backslash N$ where $N=G/K$ is a Riemannian homogeneous space, and $\Gamma<G$ is a discrete subgroup acting freely and properly discontinuously on $N$.

An importance class of Riemannian homogeneous spaces is given by symmetric spaces. A \textit{symmetric space} is a Riemannian manifold $N$ such that for any $p\in N$, there exists an isometry of $N$ that fixes $p$ and induces the negation map on $T_pN$. A symmetric space is Riemannian homogeneous, thus can be written $N=G/K$, where $G$ is the isometry group of $N$. A \textit{locally symmetric space} is a Riemannian manifold $M$ that is locally modelled on some symmetric space $N=G/K$; if it is complete, then $M=\Gamma\backslash G/K$ for some $\Gamma<G$. References on Riemannian homogeneous spaces and symmetric spaces include ~\cite{arvanitogeorgos2003introduction}~\cite{helgason1979differential}~\cite{wolf2011spaces}.
 
As usual, we use script lowercase letters to denote the Lie algebra of a Lie group. For example $\Lg$ denotes the Lie algebra of $G$.
\begin{prop}\label{prop:Cas}
Let $M=\Gamma\backslash G/K$ be a locally Riemannian homogeneous space. Suppose there exists a $\Gamma$-invariant element in $\Sym\Lg$ whose image in $\Sym(\Lg/Ad_g\Lk)$ is negative definite for every $g\in G$, then there is a differential operator $\Omega$ on $SM$ satisfying the following:
\begin{enumerate}[(1)]
\item $\Omega$ is a second order symmetric operator, and its symbol in the horizontal part is negative definite.
\item $\Omega$ commutes with the vertical Laplacian $\Delta_V$, the total Laplacian $\Delta$, and the geodesic vector field $X$ on $SM$.
\end{enumerate}
\end{prop}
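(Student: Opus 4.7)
The plan is to realize $\Omega$ as the descent of a right-invariant second-order differential operator on $G$ built from $\omega$. Using the identifications $N = G/K$ and $SN = G/K'$, where $K' \subset K$ is the stabilizer of a chosen unit vector $\xi_0 \in T_o N \cong \Lg/\Lk$ under the isotropy representation, we have $SM = \Gamma\backslash G/K'$. Writing $\omega = \sum_i X_i \otimes_s Y_i$ with $X_i, Y_i \in \Lg$, I set
\[
\widetilde\Omega := \tfrac{1}{2}\sum_i \bigl(R_{X_i} R_{Y_i} + R_{Y_i} R_{X_i}\bigr),
\]
where $R_Z$ denotes the right-invariant vector field on $G$ associated to $Z \in \Lg$.

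First I would show that $\widetilde\Omega$ descends to $SM$. Right-invariance is automatic, so $\widetilde\Omega$ descends to $G/K'$. For further descent to $\Gamma\backslash G/K'$ I would invoke the transformation rule $(L_\gamma)_* R_Z = R_{\mathrm{Ad}_\gamma Z}$, which shows that $\widetilde\Omega$ is invariant under left translation by $\gamma \in \Gamma$ precisely when $\omega$ is $\mathrm{Ad}_\gamma$-invariant as an element of $\Sym\Lg$; this is exactly the $\Gamma$-invariance hypothesis. For property (1), the operator is manifestly second order, and symmetry follows from $\omega \in \Sym\Lg$ combined with $R_Z^* = -R_Z$ with respect to Haar measure on $G$ (needing unimodularity of $G$, which is automatic in the cases of interest). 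The right trivialization identifies $T^*_g G \cong \Lg^*$ in such a way that $R_Z$ at $g$ has symbol the constant covector $Z$, and hence the symbol of $\widetilde\Omega$ at $g$ corresponds to $\omega$ viewed as a quadratic form on $\Lg^*$. Horizontal cotangent vectors at $gK \in G/K$ annihilate $\mathrm{Ad}_g\Lk$, so they lie in $(\Lg/\mathrm{Ad}_g\Lk)^*$; restricting the symbol to these yields exactly the image of $\omega$ in $\Sym(\Lg/\mathrm{Ad}_g\Lk)$, which is negative definite by assumption.

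Property (2) will follow from the principle that $X$, $\Delta_V$, and $\Delta$ are intrinsic $G$-invariant objects on $SN = G/K'$, hence lift to left-invariant differential operators on $G$: concretely $X$ coincides with $L_{\xi_0}$ for $\xi_0$ chosen in a reductive complement of $\Lk$ (so that the geodesic through $gK$ has the form $g\exp(t\xi)K$), while $\Delta_V$ and $\Delta$ come from suitable $\mathrm{Ad}_{K'}$-invariant second-order elements of $U(\Lg)$ acting by left-invariant vector fields. Since $[L_Y, R_Z] = 0$ for all $Y, Z \in \Lg$---left and right translations on $G$ commute---the operator $\widetilde\Omega$ commutes with each of these on $G$, and the commutativity passes to $SM$. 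The main obstacle I anticipate is carrying out the identification of $X$, $\Delta_V$, $\Delta$ as concrete left-invariant operators in the generality of Riemannian homogeneous spaces beyond the symmetric case, where a canonical reductive decomposition $\Lg = \Lk \oplus \Lp$ is not guaranteed; verifying unimodularity of $G$ in this broader setting, so as to secure the symmetry of $\Omega$, is a secondary technical point that should follow from the existence of a suitable invariant measure.
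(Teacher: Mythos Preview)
Your approach is in the right spirit---both you and the paper build $\Omega$ from $\omega$ via the infinitesimal $G$-action---but there is a genuine gap: you assume $SN = G/K'$, which requires the isotropy group $K$ to act transitively on the unit sphere in $T_oN$. This holds only when $N$ is isotropic (euclidean or rank-one symmetric), not for general Riemannian homogeneous spaces covered by the proposition; for a higher-rank symmetric space, say, $G$ acts on $SN$ but not transitively, so there is no single $K'$ with $SN = G/K'$, and your descent argument from $G$ breaks down. The same issue infects your identification of $X$ with $L_{\xi_0}$: when the action on $SN$ is not transitive there are many orbits, each with its own $\xi_0$, and no single left-invariant vector field on $G$ can represent $X$ globally.

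The paper sidesteps this by never lifting to $G$. It uses the (possibly non-transitive) isometric $G$-action on $SN$ directly: the associated-vector-field map $\Lg \to \mathfrak{X}(SN)$ extends to $U(\Lg) \to \Diff^*(SN)$, and $\Omega$ is simply the image of $\omega$ under this map (via the PBW section $\Sym\Lg \to U_{\le 2}(\Lg)$), then passed to the $\Gamma$-quotient. This also dissolves your two anticipated obstacles. For commutativity, you do not need to realize $X$, $\Delta_V$, $\Delta$ as specific left-invariant operators on $G$: it suffices that they are $G$-invariant on $SN$ (being defined purely in terms of the Riemannian structure), so they commute with every operator in the image of $U(\Lg)$, since the flow of any fundamental vector field is the $G$-action itself. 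For symmetry, you do not need unimodularity of $G$: since $G$ acts isometrically on $SN$, the fundamental vector fields are Killing fields, hence divergence-free and therefore anti-self-adjoint with respect to the Riemannian volume on $SN$. Working directly on $SN$ with its own measure, rather than on $G$ with Haar measure, removes both issues at once. The horizontal-symbol computation is then done on the base $N$ via the identification $TN \cong G\times_K(\Lg/\Lk)$, under which the symbol of $\Omega$ at $gK$ is exactly the image of $\omega$ in $\Sym(\Lg/Ad_g\Lk)$.
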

In the sequel, the operator $\Omega$ so constructed will be called a \textit{Casimir operator} on $SM$. Here, our convention for symbols of differential operators is the one used in differential geometry. Namely, at a local level, the symbol of the $k$-th order operator $P=\sum_{|\alpha|\le k}c_\alpha\partial^\alpha$ is $\sum_{|\alpha|=k}c_\alpha\xi^\alpha$. When $k=2$, this differs from the microlocal analysis convention by a sign.

The condition on $M$ in Proposition~\ref{prop:Cas} is not too restrictive. For example, it holds for all finite $\Gamma$ (thus all compact $G$), as well as all (complete) locally symmetric $M$ as we will show in a moment.
\begin{rem}\label{rem:(G,N)}
Assuming the element in $\Sym\Lg$ in Proposition~\ref{prop:Cas} is actually $G$-invariant, our argument will show the same for the slightly wider class of $(G,N)$-manifolds $M$ (i.e. manifolds that is locally modelled on the $G$-manifold $N=G/K$, with transition functions in $G$). Since for our analytic application we only care about compact (in particular complete) $M$, in which case all $(G,N)$-manifolds are of the form $\Gamma\backslash G/K$, we don't bother to define the notion of $(G,N)$-manifolds.
\end{rem}
\begin{cor}\label{cor:loc_sym}
If $M$ is a locally symmetric space (in particular, if it has constant curvature), then there exists an operator $\Omega$ on $SM$ satisfying the conditions above.
\end{cor}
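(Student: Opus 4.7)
\textbf{Plan}: By Proposition~\ref{prop:Cas}, the task reduces to producing a $\Gamma$-invariant $C \in \Sym\Lg$ whose image in $\Sym(\Lg/\text{Ad}_g\Lk)$ is negative definite for every $g \in G$. I will in fact construct a $G$-invariant $C$; by the $\text{Ad}_g$-equivariance of the quotient maps, negative definiteness at arbitrary $g$ then follows from negative definiteness at $g=e$, i.e., of the image in $\Sym\Lp$, where $\Lg=\Lk\oplus\Lp$ is the Cartan decomposition.

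Without loss of generality, I replace $N$ by its simply connected universal cover and $G$ by the identity component of the isometry group of that cover. The de Rham decomposition of the resulting symmetric space then provides a direct-sum splitting $\Lg = \Lg_0 \oplus \bigoplus_i \Lg_i$ compatible with the Cartan decomposition, where $\Lg_0 \cong \RR^k \rtimes \mathfrak{o}(k)$ corresponds to the Euclidean factor and each $\Lg_i$ is simple, associated to an irreducible symmetric space of compact or noncompact type. It suffices to build the required element on each summand and sum them.

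On each simple factor I take $C_i := \epsilon_i C_{B_i}$ with $\epsilon_i \in \{\pm 1\}$, where $C_{B_i} \in \Sym\Lg_i$ is the Casimir of the Killing form $B_i$. Standard symmetric-space structure theory gives that $\Lg_i = \Lk_i \oplus \Lp_i$ is $B_i$-orthogonal with $B_i|_{\Lp_i}$ definite (positive for noncompact type, negative for compact type), and a short computation shows that the image of $C_{B_i}$ in $\Sym\Lp_i$, viewed as a bilinear form on $\Lp_i^*$, equals $(B_i|_{\Lp_i})^{-1}$, which has the same sign as $B_i|_{\Lp_i}$; I choose $\epsilon_i$ so that the image is negative. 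For the Euclidean summand, I pick an orthonormal basis $\{e_j\}$ of $\Lp_0 = \RR^k$ and set $C_0 := -\sum_j e_j \otimes e_j \in \Sym\Lp_0 \subset \Sym\Lg_0$. Since $\Lp_0$ is an ideal of $\Lg_0$ on which $\text{Ad}(G_0)$ acts through the standard $O(k)$-representation, $C_0$ is $G_0$-invariant with image in $\Sym\Lp_0$ manifestly negative definite.

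The main obstacle is the Euclidean factor: $\Lg_0$ is not semisimple, so no Killing-form Casimir is available and one must fall back on the ad hoc construction above; the key verification is the $G_0$-invariance of $C_0$ under translations, which is immediate from $[\Lp_0,\Lp_0]=0$ in the flat model. Combining the pieces, $C := C_0 + \sum_i C_i \in \Sym\Lg$ is the desired $G$-invariant element and Corollary~\ref{cor:loc_sym} follows by invoking Proposition~\ref{prop:Cas}.
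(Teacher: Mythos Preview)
Your proof is correct and follows essentially the same strategy as the paper: pass to the simply connected cover, split via the de Rham decomposition into Euclidean and semisimple (compact/noncompact type) factors, use the Killing-form Casimir with the appropriate sign on each semisimple factor, and use the explicit element $-\sum_j e_j\otimes e_j$ on the Euclidean factor. The only cosmetic difference is that the paper groups the semisimple factors by type rather than treating each irreducible factor separately, and it notes (as you do not) that only the complete case is being handled, the incomplete case following from Remark~\ref{rem:(G,N)}.
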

In particular, there exists a Casimir operator $\Omega$ as above when $M$ has constant curvature. In fact, an explicit choice of $\Omega$ in this special case is given in Corollary~\ref{cor:const_curv}.
\begin{proof}
We only give the proof when $M$ is complete. The general case (which we won't need) follows from Remark~\ref{rem:(G,N)}. 

Let $N$ be the universal cover of $M$, which is a symmetric space. It splits into $N=N_0\times N_1\times N_2$, where $N_0=\R^r$ is euclidean, $N_1$ is of compact type and $N_2$ is of noncompact type. The isometry group of $N$ is $G=G_0\times G_1\times G_2$ where $G_i$ is the isometry group of $N_i$ (see e.g. Wolf~\cite[Theorem 8.3.8,8.3.9]{wolf2011spaces} for these statements). We claim there is a $G$-invariant element in $\Sym\Lg$ whose image is negative definite in each $\Sym(\Lg/Ad_g\Lk)$, which is sufficient for applying Proposition~\ref{prop:Cas}. It suffices to check the claim for each factor $N_i=G_i/K_i$.

When $i=1$, the Killing form of $\Lg_1$ is negative definite, thus its dual is a $G_1$-invariant negative definite element in $\Sym\Lg_1$.

When $i=2$, the Killing form of $\Lg_2$ is nondegenerate on $\Lg_2$, negative definite on each $Ad_g\Lk_2$ and positive definite on the orthogonal complement of each $Ad_g\Lk_2$. Therefore its negative dual is a $G$-invariant element in $\Sym\Lg_2$ whose image in each $\Sym(\Lg_2/Ad_g\Lk_2)$ is negative definite.

When $i=0$, $G_0$ is the euclidean group $E(r)=\left\{\left(\begin{smallmatrix}g&v\\0&1\end{smallmatrix}\right)\colon g\in O(r),\ v\in\R^r\right\}$ and $K_0$ is the upper-left $O(r)$. The Lie algebras are $\Lg_0=\left\{\left(\begin{smallmatrix}X&v\\0&0\end{smallmatrix}\right)\colon X\in\mathfrak{o}(r),\ v\in\R^r\right\}$ and $\Lk_0=\mathfrak{o}(r)$. It is straightforward to check the element $-\sum_{i=1}^n\left(\begin{smallmatrix}0&e_i\\0&0\end{smallmatrix}\right)^2\in\Sym\Lg_0$ ($e_i$ is the $i$-th coordinate vector) satisfies the desired conditions.
\end{proof}
\begin{proof}[Proof of Theorem~\ref{prop:Cas}]
If $\mathcal{M}$ is any manifold with a left $G$-action, the negative associated vector field construction gives a Lie algebra homomorphism $\Lg\to\mathfrak{X}(\mathcal{M})$, which induces a $G$-equivariant filtered-algebra homomorphism $U(\Lg)\to\Diff^*(\mathcal{M})$ where $U(\Lg)$ is the universal envelopping algebra of $\Lg$, and $\Diff^*(\mathcal{M})$ is the algebra of $C^\infty$-differential operators on $\mathcal{M}$. The induced map on the associated graded algebra is $\mathrm{Sym}(\Lg)\to\Gamma(\mathcal{M};\mathrm{Sym}(T\mathcal{M}))$, where the right hand side is recognized as the domain of symbols of $C^\infty$-differential operators on $\mathcal{M}$, and the left hand side is by the Poincar\'e-Birkhoff-Witt theorem.

In our situation, the isometric $G$-action on $N=G/K$ lifts to an isometric $G$-action on the unit tangent bundle $SN$. We have the following $G$-equivariant commutative diagram.
$$\begin{tikzcd}
\Diff^*_{\le2}(SN)\ar[r]&\Gamma(SN;\Sym(T(SN)))\ar[r]&\Gamma(SN;\Sym(T(SN)/T^V(SN)))\ar[sloped,dd,"\cong"]\\
U_{\le2}(\Lg)\ar[u]\ar[d]\ar[r]&\Sym\Lg\ar[u]\ar[d]\arrow[l,bend right=25]&\\
\Diff^*_{\le2}(N)\ar[r]&\Gamma(N;\Sym(TN))\ar[r,hook,"\pi^*"]&\Gamma(SN;\pi^*(\Sym(TN))).
\end{tikzcd}$$
Here, the maps from the first column to the second are symbol maps, $\Sym\Lg\subset\Lg\otimes\Lg\to U_{\le2}(\Lg)$ is a section of $U_{\le2}(\Lg)\to\Sym\Lg$, $\pi\colon SN\to N$ is the projection, and $T^V(SN)$ is the vertical subbundle of $T(SN)$.

Now pass to the $\Gamma$-quotient $M=\Gamma\backslash N$.  Taking $\Gamma$-invariant elements gives (still using $\pi\colon SM\to M$ for the projection)
$$\begin{tikzcd}
\Diff^*_{\le2}(SM)\ar[r]&\Gamma(SM;\Sym(T(SM)))\ar[r]&\Gamma(SM;\Sym(T(SM)/T^V(SM)))\ar[sloped,dd,"\cong"]\\
U_{\le2}(\Lg)^\Gamma\ar[u]\ar[d]\ar[r]&(\Sym\Lg)^\Gamma\ar[u]\ar[d]\arrow[l,bend right=25]&\\
\Diff^*_{\le2}(M)\ar[r]&\Gamma(M;\Sym(TM))\ar[r,hook,"\pi^*"]&\Gamma(SM;\pi^*(\Sym(TM))).
\end{tikzcd}$$

The operators $\Delta_V,\Delta,X$ on $SN$ are defined only in terms of metric properties of $N$, therefore they are $G$-invariant. It follows that every vector field in the image of $\Lg\to\mathfrak{X}(SN)$ commutes with them. Since $\Lg$ generates $U(\Lg)$, the same is true for $U(\Lg)\to\Diff^*(SN)$. Also, since the $G$-action on $SN$ is symmetric, the image of $\Lg\to\mathfrak{X}(SN)$ consists of Killing fields, which are anti-symmetric as operators on $SN$. Therefore every operator in the image of $\Sym\Lg\to\Diff_{\le2}^*(SN)$ is symmetric. Passing to the quotient then implies every operator in the image of $(\Sym\Lg)^\Gamma\to\Diff^*(SM)$ is symmetric and commutes with $\Delta_V,\Delta,X$ on $SM$. 

By assumption, there is an element $\omega\in(\Sym\Lg)^\Gamma$ whose image in each $\Sym(\Lg/Ad_g\Lk)$ is negative definite. We show its image $\Omega\in\Diff^*_{\le2}(SM)$ in the above diagram satisfies our requirements. Condition (2) and the symmetric property in condition (1) has just been justified. For the negative definite property in (1), note the horizontal part of the symbol of a second order differential operator on $SM$ sits in the upper-right corner of the diagram, thus it suffices to show the image of $\omega$ in $\Gamma(M;\Sym(TM))$ is pointwisely negative definite. We include $\omega$ back to $\Sym\Lg$ and prove the equivalent statement that its image in $\Gamma(N;\Sym(TN))$ is pointwisely negative definite.

There is a natural identification between the tangent bundle $TN=T(G/K)$ and the associated bundle $G\times_K(\Lg/\Lk)$ where $K$ acts on $\Lg/\Lk$ via the adjoint action, which on each fiber $T_{gK}N$ is given by $$T_gG/T_gK\xrightarrow{\cong}(gK)\times_K(\Lg/\Lk),\ [L_{g,*}X]\mapsto[(g,[X])],\ \text{for }X\in\Lg\cong T_eG.$$ Thus $\mathfrak{X}(N)=\Gamma(G;\Lg/\Lk)^K$. The negative associated vector field map is given by $\Phi\colon\Lg\to\Gamma(G;\Lg/\Lk)^K$, $\xi\mapsto(g\mapsto-[Ad_{g^{-1}}\xi])$. The map $\Sym\Lg\to\Gamma(N;\Sym(TN))$ is then identified with the second symmetric power of $\Phi$, namely the map $\Sym\Lg\to\Gamma(G;\Sym(\Lg/\Lk))^K$, $\alpha\mapsto(g\mapsto[Ad_{g^{-1}}\alpha])$. The assumption on $\omega$ exactly says its image under this map is fiberwisely negative definite.
\end{proof}
In Lie theory, the Casimir element of a Lie algebra $\Lg$ with respect to an $Ad$-invariant nondegenerate symmetric bilinear form is the image of the dual of this bilinear form in $U(\Lg)$. We see in the previous proof that our choice of $\Omega$ exactly arises from (the negative of) a Casimir element in $U(\Lg)$ when $M$ is locally symmetric without euclidean factors. This clarifies the name ``Casimir operator'' in those special cases. Finally, in the locally symmetric case, it is not hard to see if we break $M$ into irreducible pieces and use appropriate multiples of the Killing form in each piece in the construction, the operator $\Omega$ on $SM$ will have horizontal symbol agreeing with the pullback of the symbol of the usual Laplacian $\Delta_M$ on the base $M$. In the next section we will see this more explicitly in a simpler case. The next section will not be relevant to our main goal, though.

\subsection{Examples: Locally isotropic spaces and constant curvature spaces}\label{sec2.2}
Recall that an \textit{isotropic space} is a Riemannian manifold $N$ whose isometry group acts transitively on unit tangent vectors, and a \textit{locally isotropic space} is a Riemannian manifold $M$ that is locally isometric to an isotropic space. Thus in particular, (locally) isotropic spaces are (locally) symmetric. In fact, (locally) isotropic spaces exactly consist of (locally) euclidean spaces and rank one (locally) symmetric spaces. Isotropic spaces are completely classified: euclidean spaces $\R^n=E(n)/O(n)$; spheres $S^n=SO(n+1)/SO(n)$; real, complex, quaternionic projective spaces $\mathbb{P}^n_\R=SO(n+1)/O(n)$, $\mathbb{P}^n_\C=SU(n+1)/U(n)$, $\mathbb{P}^n_\mathbb{H}=Sp(n+1)/Sp(n)\cdot Sp(1)$; the Cayley projective plane $\mathbb{P}^2_\mathbb{O}=F_4/Spin(9)$; real, complex, quaternionic hyperbolic spaces $\mathbb{H}^n_\R=SO^+(n,1)/SO(n)$, $\mathbb{H}^n_\C=SU(n,1)/U(n)$, $\mathbb{H}^n_\mathbb{H}=Sp(n,1)/Sp(n)\cdot Sp(1)$; the Cayley hyperbolic plane $\mathbb{H}^2_\mathbb{O}=F_4^*/Spin(9)$. For an exposition, a good reference is ~\cite{wolf2011spaces}.\medskip

Let $N=G/K$ be a noneuclidean irreducible symmetric space where $G$ is the isometry group of $N$. Then $\Lg$ is semisimple whose Killing form is nondegenerate and is negative definite on $\Lk$. Let $\Lp$ be the orthogonal complement of $\Lk$, which is $Ad|_K$-invariant and satisfies $[\Lp,\Lp]\subset\Lk$. We have an identification $TN\cong G\times_K\Lp$ as in the proof of Proposition~\ref{prop:Cas}. Thus, a $G$-invariant Riemannian metric on $N$ is the same as an $Ad|_K$-invariant inner product on $\Lp$. In fact, let $\kappa$ be an appropriate scalar times the Killing form on $\Lg$, then $\kappa|_\Lp$ gives back the Riemannian metric on $N$.

Another way to recover the Riemannian metric on $N$ from $\kappa$ is the following. Left translating $\kappa$ on $G$ equips $G$ with a bi-invariant pseudo-Riemannian metric. Then the Riemannian metric on $N=G/K$ is characterized by requiring the quotient map $\pi_0\colon G\to N$ to be a pseudo-Riemannian submersion. Moreover, since the Lie exponential map on $G$ agrees with the pseudo-Riemannian exponential map, we know $\pi_0$ has totally geodesic fibers, i.e. every fiber of $\pi_0$ is a totally geodesic submanifold of $G$.

It is a classical result that the negative Casimir element $-c(\Lg)\in U(\Lg)$ with respect to $\kappa$ acts as the Laplacian $\Delta_G$ on the pseudo-Riemannian manifold $G$, via the map $U(\Lg)\to\Diff^*(G)$ defined as in the proof of Proposition~\ref{prop:Cas}. Since $\pi_0$ has totally geodesic fibers, classical results for (pseudo-)Riemannian submersion tell that $\Delta_G\pi_0^*=\pi_0^*\Delta_N$ (~\cite[(3.10)]{bergery1982laplacians}). Therefore, $-c(\Lg)$ acts on $N$ as the Laplacian $\Delta_N$.\medskip

Now assume furthermore $N$ is isotropic. Then we can write $SN=G/H$ where $H$ is the isotopy group of a unit tangent vector $[(e,X_0)]\in eK\times_KS(\Lp)\cong S_{eK}N$ ($S(\Lp)$ is the unit sphere in $\Lp$). Let $\Ls$ be the orthogonal complement of $\Lh$ in $\Lk$, then like above, $T(SN)\cong G\times_H(\Ls\oplus\Lp)$. Under this identification, the horizontal/vertical subbundles are
\begin{equation}\label{eq:hor/vert}
T^H(SN)\cong G\times_H\Lp,\ T^V(SN)\cong G\times_H\Ls.
\end{equation} (To see the claim on the horizontal part, one has to note the connection on the principal $K$-bundle $K\hookrightarrow G\to N$ given by the horizontal distribution $\Lp$ induces the Levi-Civita connection on $N$ because it is metric-compatible and torsion-free (~\cite[Proposition~I.13]{kowalski2007generalized}).)

Let $g_1$ be the Riemannian metric on $SN$ induced from the one on $N$, namely the restriction of the Sasaki metric on $TN$. Let $g_2$ be the (pseudo-)Riemannian metric on $SN$ given by $\kappa|_{\Ls\oplus\Lp}$, or alternatively the one such that the projection $G\to G/H=SN$ is a pseudo-Riemannian submersion. By the same arguments as before, in view of the construction in the proof of Proposition~\ref{prop:Cas} we have the following.
\begin{cor}\label{cor:Cas_SN}
The negative Casimir element $-c(\Lg)$ acts on $SN$ as the Laplacian with respect to the (pseudo-)Riemannian metric $g_2$. In particular, a Casimir operator $\Omega$ as in Proposition~\ref{prop:Cas} can be chosen to be the Laplacian on $(SN,g_2)$.\qed
\end{cor}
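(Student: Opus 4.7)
The plan is to transport the two-paragraph argument immediately preceding the corollary verbatim, with $SN=G/H$ and $\pi\colon G\to SN$ in place of $N=G/K$ and $\pi_0\colon G\to N$; everything in that argument depends only on having a closed Lie subgroup of an ambient Lie group equipped with a bi-invariant (pseudo-)metric, so it applies to $H$ just as to $K$.

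Concretely, I would first equip $G$ with the bi-invariant pseudo-Riemannian metric obtained by left-translating $\kappa$. By the defining property of $g_2$, the quotient map $\pi\colon G\to G/H=SN$ is a pseudo-Riemannian submersion, and its fibers $gH$ are totally geodesic because the Lie exponential of $G$ agrees with the pseudo-Riemannian exponential (so one-parameter subgroups generated by $\Lh$ are geodesics through the identity, and bi-invariance then spreads this to every coset). The formula~\cite[(3.10)]{bergery1982laplacians} for submersions with totally geodesic fibers then gives $\Delta_G\,\pi^*=\pi^*\Delta_{SN,g_2}$. Combined with the classical fact that $-c(\Lg)$ acts as $\Delta_G$ on $G$ and with the bi-invariance of $\Delta_G$ (which lets it preserve $\pi^*C^\infty(SN)$), this shows that under $U(\Lg)\to\Diff^*(SN)$ the Casimir $-c(\Lg)$ descends to $\Delta_{SN,g_2}$, yielding the first assertion.

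For the ``in particular'' part I would verify that the negative dual $\omega\in\Sym\Lg$ of $\kappa$ is a valid choice for Proposition~\ref{prop:Cas}: its $G$-invariance follows from the $Ad$-invariance of $\kappa$, and its image in each $\Sym(\Lg/Ad_g\Lk)$ is $Ad_g$-conjugate to the negative dual of $\kappa|_\Lp$, which is negative definite because $\kappa|_\Lp$ is the (positive definite) Riemannian metric on $N$. Since $\Lg$ is semisimple, the PBW section sends $\omega$ to exactly $-c(\Lg)$, so the last diagram in the proof of Proposition~\ref{prop:Cas} realizes the resulting $\Omega\in\Diff^*(SM)$ as the image of $-c(\Lg)$, which the previous paragraph identifies with the descent of $\Delta_{SN,g_2}$ to $SM$ (the descent exists because $\Gamma$ acts by $G$-isometries, hence preserves both $g_1$ and $g_2$).

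The only mildly nontrivial step is the totally-geodesic-fiber assertion for $H$, but it is word-for-word the one already used for $K$; once that is in hand, the rest of the proof is a direct parallel of the discussion preceding the corollary, with no new analytic input required.
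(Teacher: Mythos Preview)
Your proposal is correct and coincides with the paper's own approach: the paper marks the corollary with \qed and the sentence immediately preceding it says ``By the same arguments as before, in view of the construction in the proof of Proposition~\ref{prop:Cas}\ldots'', i.e.\ one simply repeats the submersion-with-totally-geodesic-fibers argument with $H$ and $\pi\colon G\to SN$ in place of $K$ and $\pi_0\colon G\to N$. Your write-up is just a careful unpacking of that sentence.
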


However, from the Riemannian point of view the more natural metric to use is $g_1$. Therefore we need to relate these two metrics.

In general $g_1,g_2$ do not agree, but with respect to $SN\to N$, they induce the same horizontal/vertical splitting (the one given by the Levi-Civita connection on $N$) and are equal in the horizontal part (both equal to the lift of the Riemannian metric on $N$). Therefore the difference of $g_1,g_2$ is only in the vertical part. Since they are both $G$-invariant, each of them is determined by an $Ad|_H$-invariant nondegenerate symmetric bilinear form on $\Ls$. We have noted that $g_2$ is given by $\kappa|_\Ls$, which is positive/negative definite if $N$ is of compact/noncompact type, respectively.

\begin{lem}
Under the identifications~\eqref{eq:hor/vert}, the tautological map $T^V(SN)\xrightarrow{\cong}X^\perp\subset T^H(SN)$ is given by the $Ad|_H$-equivariant map
\begin{equation}\label{eq:iota}
\iota:=-ad_{X_0}\colon\Ls\xrightarrow{\cong}X_0^\perp.
\end{equation}
Here $X^\perp$ is the orthogonal complement in $T^H(SN)$ of the line bundle spanned by the geodesic vector field $X$, and $X_0^\perp$ is the $\kappa$-orthogonal complement in $\Lp$ of the unit vector $X_0$.
\end{lem}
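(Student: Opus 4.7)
The plan is to verify the formula at the base point $[(e, X_0)] \in SN$ and deduce the statement on the whole bundle from $G$-equivariance (equivalently, from $Ad|_H$-equivariance of the fiber map). At this base point, both identifications $T^V_{[(e,X_0)]}(SN) \cong \Ls$ and $T^H_{[(e,X_0)]}(SN) \cong \Lp$ in~\eqref{eq:hor/vert} arise from evaluating associated vector field maps, with the same sign convention that is fixed in the proof of Proposition~\ref{prop:Cas}.

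The core computation is to trace through the vertical vector at $X_0$ generated by $Y \in \Ls$. Since the one-parameter subgroup $\exp(tY) \subset K$ acts on the fiber $S_{eK}N$ by $X_0 \mapsto Ad_{\exp(tY)}(X_0) \in S(\Lp)$, differentiating at $t = 0$ and applying the canonical identification $T_{X_0}(T_{eK}N) \cong T_{eK}N \cong \Lp$ produces the element $[Y, X_0] \in \Lp$, up to an overall sign from the negative-associated-vector-field convention. The tautological map $T^V(SN) \to T^H(SN)$ then reinterprets this tangent-to-fiber vector as a vector in the ambient space $T_{eK}N$ and takes its horizontal lift; once the matching sign in the horizontal identification $T^H_{[(e,X_0)]}(SN) \cong \Lp$ is applied, the resulting horizontal vector corresponds to $-\ad_{X_0}(Y) = [Y, X_0]$, namely $\iota(Y)$.

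The remaining properties are short verifications. The image lies in $X_0^\perp$ by $Ad$-invariance of $\kappa$, which yields $\kappa([Y, X_0], X_0) = \kappa(Y, [X_0, X_0]) = 0$. Bijectivity of $\iota\colon \Ls \to X_0^\perp$ follows from $\ker(\ad_{X_0}|_\Lk) = \Lh$ (since $H \subset K$ is the $Ad|_K$-isotropy of $X_0$, its Lie algebra equals this kernel) together with the dimension count $\dim\Ls = \dim(K/H) = \dim S_{eK}N = \dim\Lp - 1$ furnished by the isotropy assumption on $N$. The $Ad|_H$-equivariance of $\iota$ is the identity $Ad_h \circ \ad_{X_0} = \ad_{Ad_h(X_0)} \circ Ad_h = \ad_{X_0} \circ Ad_h$, valid because $h \in H$ fixes $X_0$. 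The main point requiring care is the sign bookkeeping in the central calculation: the vertical and horizontal identifications must be chosen with compatible conventions so that the result carries exactly the sign in $-\ad_{X_0}$ rather than its opposite.
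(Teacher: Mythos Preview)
Your proposal is correct and follows essentially the same route as the paper: compute the derivative of the curve $t\mapsto Ad_{\exp(tY)}X_0$ in the fiber to obtain $[Y,X_0]=-\ad_{X_0}Y$, and note that the remaining assertions (image in $X_0^\perp$, bijectivity, $Ad|_H$-equivariance) are straightforward. The paper carries out the same differentiation at a general point $gH$ rather than invoking $G$-equivariance separately, and simply declares the remaining checks ``straightforward'' without writing them out as you do.

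One small clarification: your repeated references to the ``negative-associated-vector-field convention'' as the source of a sign are a red herring here. The identifications in~\eqref{eq:hor/vert} are the associated-bundle identifications $T(G/H)\cong G\times_H(\Lg/\Lh)$ given by left translation (as set up in the proof of Proposition~\ref{prop:Cas}), not the negative-associated-vector-field map $\Phi$. At the identity coset these identifications are simply the quotient maps $\Lg\to\Lg/\Lh$ restricted to $\Ls$ and $\Lp$, with no extra sign. The minus in $-\ad_{X_0}$ comes directly and only from the computation $\frac{d}{dt}\big|_{t=0}Ad_{\exp(tY)}X_0=[Y,X_0]$; there is no pair of conventional signs that need to be matched.
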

\begin{proof}
At a point $gH\in G/H\cong SN$, a vertical tangent vector $v$ given by the $\frac{d}{dt}|_{t=0}$ of a curve $gk(t)H$ in $gK/H$ for some curve $k(t)\in K$, $t\in(-\epsilon,\epsilon)$, with $k(0)=e$ and $k'(0)\in\Ls$. Via $T^V_{gH}(SN)\cong\Ls$ we can write $v=k'(0)$. On the other hand, write $gH=[(g,X_0)]=(gK,[L_{g,*}X_0])\in gK\times_K\Lp\cong T_{gK}N$, then the tautological map is given by $T_{gH}^VSN\xrightarrow{\cong}([L_{g,*}X_0])^\perp\subset T_{gK}N\cong gK\times_K\Lp\cong\Lp$, under which $v$ is mapped to $-ad_{X_0}k'(0)$, because it is the $\frac{d}{dt}|_{t=0}$ of $gk(t)H=[(gk(t),X_0)]=[(g,Ad_{k(t)}X_0)]=Ad_{k(t)}X_0\in\Lp$ under this sequence of identifications. The rest statements are straightforward.
\end{proof}
\begin{cor}\label{cor:g_1}
The vertical part of the Riemannian metric $g_1$ on $SN$ is given by $\iota^*\kappa|_{X_0^\perp}$ on $\kappa|_\Ls$ where $\iota$ is defined as in \eqref{eq:iota}.\qed
\end{cor}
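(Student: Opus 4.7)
The plan is to unwind the definition of the Sasaki metric in the $G$-equivariant picture set up above and then plug in the preceding Lemma. First I would recall that the Sasaki metric $g_1$ on $TN$ is, by definition, the direct sum (with respect to the Levi-Civita horizontal-vertical splitting) of the horizontal lift of the base metric on $N$ and the vertical lift of the same base metric; the vertical piece uses the canonical identification $T^V_\xi(TN)\cong T_{\pi(\xi)}N$. Restricting to $SN$, the vertical subspace at $\xi$ is $\xi^\perp\subset T_{\pi(\xi)}N$, and the vertical part of $g_1$ is by construction the restriction of the base metric to $\xi^\perp$, transported through this tautological isomorphism.

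Next I would specialize to the basepoint $\xi=eH$ and translate into Lie-theoretic data. Under the identifications~\eqref{eq:hor/vert}, the base metric at $eK$ is $\kappa|_\Lp$, whose further restriction to $X_0^\perp\subset\Lp$ is $\kappa|_{X_0^\perp}$. The preceding Lemma identifies the tautological map $T^V_{eH}(SN)\to X_0^\perp$ precisely with $\iota=-ad_{X_0}\colon\Ls\to X_0^\perp$. Pulling $\kappa|_{X_0^\perp}$ back along $\iota$ therefore yields the vertical part of $g_1$ at $eH$, namely $\iota^*(\kappa|_{X_0^\perp})$, regarded as an inner product on $\Ls$. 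Finally, the $G$-invariance of $g_1$ together with the $G$-equivariance of~\eqref{eq:hor/vert} propagates this formula from the basepoint $eH$ to every point of $SN$, giving the corollary.

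The main obstacle, if one can call it that, is already dispatched by the preceding Lemma (identifying the tautological vertical-to-horizontal isomorphism with $-ad_{X_0}$). What remains here is purely a careful bookkeeping of definitions and identifications; no substantive new analytic or geometric input is required, which is why the corollary can be stated in one line with a $\qed$-free proof.
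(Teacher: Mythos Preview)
Your proposal is correct and matches the paper's intended reasoning: the corollary is stated with a bare \qed\ because it follows immediately from the preceding Lemma together with the definition of the Sasaki metric, exactly as you spell out. There is no alternative argument in the paper to compare with; your write-up simply makes explicit the bookkeeping that the paper leaves implicit.
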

We can also translate the Lie-theoretic Corollary~\ref{cor:g_1} entirely into the language of Riemannian geometry.

\begin{lem}\label{lem:curvature}
At a point $(x,v)\in SN$, the vertical part of $g_2$ (as a bilinear form on $T^V_{(x,v)}(SN)$) is given by the inverse of $R_x(\cdot,v,v,\cdot)$ with respect to the vertical part of $g_1$. Here $R$ denotes the Riemannian curvature tensor of $N$.
\end{lem}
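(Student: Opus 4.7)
The plan is to verify the identity pointwise using the Lie-theoretic formulas developed in Section~\ref{sec2.2}, after invoking $G$-invariance to reduce to a single reference point. Since $g_1$, $g_2$, and the curvature tensor $R$ are all $G$-invariant, it suffices to check the claim at $(x,v) = (eK, X_0)$. There the tautological identification \eqref{eq:iota} reads $\iota = -ad_{X_0}\colon \Ls \xrightarrow{\cong} X_0^\perp \subset \Lp$, so that $T^V_{(x,v)}(SN)\cong\Ls$. By Corollary~\ref{cor:g_1} the vertical part of $g_1$ becomes
\[
g_1(\xi_1,\xi_2) = \kappa(ad_{X_0}\xi_1,\,ad_{X_0}\xi_2) = -\kappa(\xi_1,\,ad_{X_0}^2\xi_2),
\]
using the $\kappa$-skew-symmetry of $ad_{X_0}$, while by construction $g_2|_{T^V} = \kappa|_\Ls$.

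Next I translate the curvature into Lie brackets. For a Riemannian symmetric space $G/K$ the standard identity $R(X,Y)Z = -[[X,Y],Z]$ holds for $X,Y,Z\in\Lp$. Applying this to $w_i = \iota\xi_i = -[X_0,\xi_i]$ and expanding the iterated bracket gives $R(w_1,X_0)X_0 = ad_{X_0}^3\xi_1$; pairing with $w_2 = -ad_{X_0}\xi_2$ under $\kappa$ and moving one $ad_{X_0}$ across by skew-symmetry yields
\[
R_x(\iota\xi_1,v,v,\iota\xi_2) = \kappa(ad_{X_0}^2\xi_1,\,ad_{X_0}^2\xi_2) = \kappa(\xi_1,\,ad_{X_0}^4\xi_2).
\]

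Introducing the endomorphism $A := -ad_{X_0}^2|_\Ls$, which is well-defined (because $\Lh = \ker(ad_{X_0}|_\Lk)$ is $\kappa$-orthogonal to $ad_{X_0}^2\Ls$), $\kappa$-symmetric (because $ad_{X_0}$ is $\kappa$-skew), and invertible (because $ad_{X_0}|_\Ls\to X_0^\perp$ is bijective), the three relevant bilinear forms on $\Ls$ simplify to
\[
g_2(\xi_1,\xi_2) = \kappa(\xi_1,\xi_2),\quad g_1(\xi_1,\xi_2) = \kappa(\xi_1,A\xi_2),\quad R_x(\iota\xi_1,v,v,\iota\xi_2) = \kappa(\xi_1,A^2\xi_2).
\]
Rewriting the last two with $g_1$ as base gives $R_x(\cdot,v,v,\cdot) = g_1(A\cdot,\cdot)$ and $g_2(\cdot,\cdot) = g_1(A^{-1}\cdot,\cdot)$, which is exactly the inverse relationship asserted.

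The main potential obstacle is sign bookkeeping: the scalar relating $\kappa$ to the Killing form differs between compact and noncompact types, and one must track carefully the convention $\kappa|_\Lp = g_N$. As a sanity check, the identity $g_1(\xi,\xi) = |ad_{X_0}\xi|^2$ shows $g_1|_{T^V}$ is positive definite on $\Ls$ regardless of type, consistent with its interpretation as the Sasaki metric, and this pins down all the remaining signs.
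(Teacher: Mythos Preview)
Your proof is correct and follows essentially the same approach as the paper: reduce to the reference point by $G$-invariance, express $g_1$, $g_2$, and the curvature via $\kappa$ and powers of $ad_{X_0}$, and read off the inverse relationship. The only cosmetic difference is that the paper models $T^V_{(x,v)}(SN)$ by $X_0^\perp$ via the tautological map (so $g_1=\kappa$ becomes the reference form and no $\iota$ appears in the curvature), whereas you model it by $\Ls$ and pull $R$ back through $\iota$; the two computations are conjugate by $\iota$ and yield the same conclusion.
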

\begin{proof}
Write $(x,v)=gH\in G/H\cong SN$, $g\in G$, and use this to make the identification $T_xN\cong gK\times_K\Lp\cong\Lp$. Then $v=X_0$. For $X_1,X_2\in X_0^\perp\cong v^\perp\cong T^V_{(x,v)}(SN)$, we have $$(g_2)_v(X_1,X_2)=(\iota^{-1})^*(\kappa|_\Ls)(X_1,X_2)=\kappa(ad_{X_0}^{-1}X_1,ad_{X_0}^{-1}X_2)=\kappa(-ad_{X_0}^{-2}X_1,X_2).$$ On the other hand, the Riemannian curvature tensor is given by $R_x(X,Y,Z,W)=-\kappa([X,Y],$ $[Z,W])$, for $X,Y,Z,W\in\Lp$ (~\cite[Proposition~I.13]{kowalski2007generalized}~\cite[Section 8.4]{wolf2011spaces}). Thus $$R_x(X_1,v,v,X_2)=-\kappa(-ad_{X_0}X_1,ad_{X_0}X_2)=\kappa(-ad_{X_0}^2X_1,X_2).$$ Finally, $$(g_1)_v(X_1,X_2)=\kappa(X_1,X_2).$$ Comparing the three expressions gives the desired statement.
\end{proof}

Note that the discussions above carry to the local case. From the Lie point of view, $g_1,g_2,\Omega$ are $G$-invariant, thus local constructions of $\Omega$ on a locally isotropic space glue to a global Casimir operator $\Omega$ (since transition functions take values in $G$). From the Riemannian point of view, the description in Proposition~\ref{lem:curvature} enables us to write down the expression of $\Omega$ in terms of local metric data on a locally isotropic space $M$. (Explicitly, if $x^i$'s are local coordinates on $M$ and $x^i,v^i$'s are induced coordinates on $TM$, then $\Omega$ is the restriction of $-g^{ij}(\bar\partial_i\bar\partial_j-\Gamma_{ij}^k\bar\partial_k)-R^i{}_{vv}{}^j\partial_{v^i}\partial_{v^j}$ to $SM$, where $g$ is the metric tensor on $M$, $\Gamma$ is the Christoffel symbol on $M$, and $\bar{\partial_i}=\partial_{x^i}-v^j\Gamma_{ij}^k\partial_{v^k}$ is the horizontal lift of $\partial_{x^i}$ on the base. It could be a tedious exercise to directly show from this expression that $\Omega$ commutes with the vertical Laplacian, total Laplacian, and the geodesic vector field, and try to see what fails when $M$ is not locally isotropic.)

\begin{cor}\label{cor:const_curv}
If $M$ is a Riemannian manifold with constant curvature $K$, then one can choose $\Omega=\Delta_H+K\Delta_V$. Here $\Delta_H:=\Delta-\Delta_V$ is called the horizontal Laplacian on $SM$.
\end{cor}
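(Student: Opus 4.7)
The plan is to identify $\Omega$ from Proposition~\ref{prop:Cas} with $\Delta_H + K\Delta_V$ by first matching principal symbols, then using $G$-invariance plus symmetry to rule out any difference in lower-order terms. The argument is local, so I pass to the simply connected universal cover $N$, an isotropic space of constant curvature $K$.

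For $K \neq 0$, Corollary~\ref{cor:Cas_SN} allows me to take $\Omega = \Delta_{g_2}$ on $SN$. Combining Lemma~\ref{lem:curvature} with the constant-curvature identity $R_x(X_1,v,v,X_2) = K\, g_N(X_1,X_2)$ for $X_1,X_2\in v^\perp$, $|v|=1$, and using that the tautological identification $T^V_{(x,v)}SN\cong v^\perp$ is an isometry for the Sasaki vertical metric $g_1^V$, I would obtain $g_2^V = K^{-1}g_1^V$, while $g_2^H = g_1^H$ by construction. Consequently, $\Delta_{g_2}$ and $\Delta_H + K\Delta_V$ have the same principal symbol: horizontal part equal to the pullback of the symbol of $\Delta_N$, vertical part equal to $K$ times that of $\Delta_V$, with no mixed components.

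Therefore $D := \Omega - (\Delta_H + K\Delta_V)$ is a $G$-invariant, at-most first-order differential operator on $SN$, and is symmetric (with respect to the $G$-invariant Riemannian volume, unique up to scale). Writing $D = V + a$ for a vector field $V$ and a function $a$, the identity $V^* = -V - \mathrm{div}\,V$ together with $D = D^*$ forces $V = 0$; then $a$ is $G$-invariant, hence constant by transitivity. Finally, $D\cdot 1 = 0$ since both $\Omega$ (built quadratically from the Lie-algebra action) and $\Delta_H + K\Delta_V$ annihilate constants, giving $a = 0$ and hence $\Omega = \Delta_H + K\Delta_V$; this then descends to the quotient $M$.

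The flat case $K = 0$ is handled directly: in the proof of Corollary~\ref{cor:loc_sym} the euclidean factor uses $\omega = -\sum_i\xi_i^2$ with $\xi_i$ the translations, and the negative associated vector field on $S\R^n \cong \R^n\times S^{n-1}$ is $-\partial_{x^i}$, so $\omega$ lifts to $-\sum_i\partial_{x^i}^2 = \Delta_H$, matching $\Delta_H + K\Delta_V$ at $K=0$. The main obstacle I anticipate is the calculation $g_2^V = K^{-1}g_1^V$: this requires carefully navigating the identifications between $T^V SN$, $\Ls$, and $v^\perp\subset T_xN$, and tracking sign conventions throughout (differential-geometric versus microlocal symbol, and the PBW section $\Sym\Lg\hookrightarrow U(\Lg)_{\le 2}$). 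Once these are nailed down, both the symbol match and the vanishing of the first-order residue are routine.
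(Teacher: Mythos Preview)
Your argument is correct but proceeds differently from the paper. Both start from $\Omega=\Delta_{g_2}$ (Corollary~\ref{cor:Cas_SN}) and the relation $g_2^V=K^{-1}g_1^V$ obtained from Lemma~\ref{lem:curvature} together with $R_x(\cdot,v,v,\cdot)=K\,g_1^V$. From there the paper works directly at the operator level: since $SM\to M$ is a Riemannian submersion with totally geodesic fibers for \emph{both} $g_1$ and $g_2$, the horizontal Laplacian depends only on the common horizontal data (the paper cites~\cite[(1.9)]{bergery1982laplacians}), so $\Delta_H'=\Delta_H$, while $\Delta_V'=K\Delta_V$ follows from the constant conformal scaling of the fiber metric; hence $\Omega=\Delta_{g_2}=\Delta_H'+\Delta_V'=\Delta_H+K\Delta_V$ in one stroke. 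You instead match only the principal symbols and then eliminate the first-order residue $D$ by noting that a symmetric $G$-invariant first-order operator on the homogeneous space $SN=G/H$ which annihilates constants must vanish. The paper's route is shorter once one accepts the submersion lemma; yours is self-contained (no external citation) and illustrates a general ``symbol-plus-invariance'' strategy, though it leans on the transitivity of $G$ on $SN$ (isotropy), which is precisely the standing hypothesis of this subsection anyway. The $K=0$ case is handled identically in both.
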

\begin{proof}
If $K=0$, the statement is clear from our construction in Corollary~\ref{cor:loc_sym}. If $K\ne0$, $\Omega=\Delta'$, where $(\cdot)'$ means operators on $SM$ with respect to the metric $g_2$ instead of $g_1$. Since $SM\to M$ is a Riemannian submersion with totally geodesic fibers for both $g_1$ and $g_2$ on $SM$, we know $\Delta_H'=\Delta_H$ (see e.g.~\cite[(1.9)]{bergery1982laplacians}). On the other hand, Proposition~\ref{lem:curvature} tells us the vertical part of $g_2$ is $K^{-1}$ times that of $g_1$, thus $\Delta_V'=K\Delta_V$. The statement follows.
\end{proof}

\section{Convergence of spectrum}\label{sec3}
In this section we prove the following slightly refined (c.f. Corollary~\ref{cor:loc_sym}) version of Theorem~\ref{thm:Conv}.
\begin{thm}\label{thm:conv_refined}
Suppose $M=\Gamma\backslash G/K$ is a locally Riemannian homogeneous space and assume there exists a $\Gamma$-invariant element in $\Sym\Lg$ whose image in $\Sym(\Lg/Ad_g\Lh)$ is define for every $g\in G$, then the conclusion of Theorem~\ref{thm:Conv} holds.
\end{thm}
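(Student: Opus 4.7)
The plan is to adapt the Kolb--Weich--Wolf strategy to the general setting by using the Casimir operator $\Omega$ supplied by Proposition~\ref{prop:Cas} as the simultaneous diagonalizer. Since $\Omega$ commutes with $\Delta_V$, $X$, and $\Delta$, it commutes with $P_\gamma$ for every $\gamma$ and (after restricting to the natural $L^2(M)\subset L^2(SM)$) with $\Delta_M$. The sign-definiteness of the horizontal symbol of $\Omega$, together with its commutation with the fiber-elliptic $\Delta_V$, makes $\Omega$ elliptic on $SM$, so it admits a discrete orthogonal spectral decomposition $L^2(SM)=\bigoplus_\mu \Hcal_\mu$ into finite-dimensional $\Omega$-eigenspaces. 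Because $\Delta_V$ commutes with $\Omega$, each $\Hcal_\mu$ further splits into vertical spherical harmonic components $\Hcal_\mu=\bigoplus_{k\ge 0}\Hcal_\mu^{(k)}$, with $\Hcal_\mu^{(0)}\subset L^2(M)$, and each $\Hcal_\mu$ is invariant under $P_\gamma$ and under $\Delta_M$.

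On each fixed $\Hcal_\mu$ I would set up the Grushin problem
\begin{equation*}
\Gcal_\mu(\gamma,\lambda):=\begin{pmatrix}(P_\gamma-\lambda)|_{\Hcal_\mu} & R_-^\mu\\ R_+^\mu & 0\end{pmatrix}\colon \Hcal_\mu\oplus\Hcal_\mu^{(0)}\to\Hcal_\mu\oplus\Hcal_\mu^{(0)},
\end{equation*}
where $R_+^\mu$ is the orthogonal projection onto $\Hcal_\mu^{(0)}$ and $R_-^\mu$ is its adjoint inclusion. The key point is that the $c_n\gamma^2\Delta_V$ term in $P_\gamma$ renders $(P_\gamma-\lambda)$ invertible on $\bigoplus_{k\ge 1}\Hcal_\mu^{(k)}$ for $\gamma$ large, with inverse of size $O(\gamma^{-2})$, so $\Gcal_\mu$ is invertible. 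The effective Hamiltonian $E_{-+}^\mu(\gamma,\lambda)$ on the finite-dimensional space $\Hcal_\mu^{(0)}$ then has the form $-\lambda + \gamma^2 R_+^\mu X(P_\gamma-\lambda)^{-1}|_{k\ge 1}\,X R_-^\mu$, and a direct computation (morally the hydrodynamic reduction of $P_\gamma$ to $\Delta_M$ on $\ker\Delta_V$) shows $E_{-+}^\mu(\gamma,\lambda)\to(\Delta_M-\lambda)|_{\Hcal_\mu^{(0)}}$ in norm as $\gamma\to\infty$. Standard Grushin-problem machinery then gives both the spectral matching with multiplicities and the resolvent convergence on each fixed $\Hcal_\mu$.

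The main obstacle, and the crux of the argument, is uniformity across the infinitely many $\Omega$-eigenspaces: for a given bounded $U\Subset\CC$ I need to show that spectrum of $P_\gamma$ meeting $U$ comes, uniformly in large $\gamma$, from only finitely many $\Hcal_\mu$. I would argue by contradiction. Suppose not; then there are sequences $\gamma_j\to\infty$, $\mu_j\to\infty$, $\lambda_j\in U$, and normalized $u_j\in\Hcal_{\mu_j}$ with $P_{\gamma_j}u_j=\lambda_j u_j$. Decomposing $u_j=\sum_k u_j^{(k)}$ in vertical spherical harmonics and testing the eigenequation against $u_j$ and against $\Omega u_j$, the $c_n\gamma_j^2\Delta_V$ term forces the $L^2$ mass of $u_j$ to concentrate on small $k$ as $\gamma_j\to\infty$; but the $\Omega$-eigenvalue $\mu_j\to\infty$, combined with the sign-definiteness of the horizontal symbol of $\Omega$, forces the horizontal energy on those low vertical modes to blow up. A careful bookkeeping of the projections to spherical harmonics and of the commutators $[X,\Delta_V]$ then yields a lower bound on $|\Re\lambda_j|$ (or its imaginary part) incompatible with $\lambda_j\in U$, completing the proof of both~\eqref{spec_conv} and~\eqref{res_conv}.
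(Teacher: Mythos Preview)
Your outline is the paper's strategy: decompose by the Casimir $\Omega$, run a Grushin problem on each $\Omega$-eigenspace with the $k=0$ vertical mode as the auxiliary space, show $E_{-+}\to\Delta_M-\lambda$ there, and then obtain uniformity in the eigenvalue by contradiction. Two steps, however, need repair.

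First, $\Omega$ is \emph{not} elliptic on $SM$: Proposition~\ref{prop:Cas} only asserts that its \emph{horizontal} symbol is definite, so the vertical symbol may degenerate. What is elliptic is $\Omega+C\Delta_V$ for $C$ large; consequently the joint eigenspaces $\Hcal_\mu^{(k)}$ are finite-dimensional, but $\Hcal_\mu$ itself is in general infinite-dimensional. This is not cosmetic, because on the infinite-dimensional space $\bigoplus_{k\ge1}\Hcal_\mu^{(k)}$ you must actually \emph{prove} that $\Pi^\perp(P_\gamma-\lambda)\Pi^\perp$ is invertible. The real-part estimate $\Re\big((P_\gamma-\lambda)u,u\big)=c_n\gamma^2(\Delta_V u,u)-\Re\lambda\,\|u\|^2$ gives injectivity, but closed range and surjectivity require the observation that $P_\gamma+(\Omega-\mu)/C$ is genuinely elliptic when restricted to $\Hcal_\mu$ (this is the content of the paper's Lemma~\ref{lem:inv_c}).

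Second, your uniformity sketch does not close as written. Testing the eigenequation against $u_j$ indeed gives $\|u_j^{(\ge1)}\|=O(\gamma_j^{-1})$ and hence mass concentration on $k=0$, but testing against $\Omega u_j$ is vacuous since $\Omega u_j=\mu_j u_j$ already. The actual mechanism is to project the eigenequation onto the $k=1$ component, obtaining $\tfrac{\gamma_j^2}{n}u_j^{(1)}-\gamma_j\Pi_{1}X(u_j^{(0)}+u_j^{(2)})=\lambda_j u_j^{(1)}$. Two facts then do the work: the identity $n\Pi X^2\Pi=-\Delta_M$ and the ellipticity of $\Pi\Omega\Pi$ as a second-order operator on $M$ commuting with $\Delta_M$ combine to give $\|\Pi_{1}Xu_j^{(0)}\|^2=\tfrac1n(\Delta_M u_j^{(0)},u_j^{(0)})\gtrsim\mu_j$, while ellipticity of $\Omega+C\Delta_V$ gives $\|\Pi_{1}Xu_j^{(2)}\|^2\lesssim\|u_j^{(2)}\|_{H^1}^2\lesssim(\mu_j+1)\|u_j^{(2)}\|^2\lesssim(\mu_j+1)\gamma_j^{-2}$. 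Balancing these in the $k=1$ equation forces $\mu_j\lesssim1$, the contradiction. Your ``horizontal energy blows up'' intuition is correct, but the two missing ingredients are precisely the identity $n\Pi X^2\Pi=-\Delta_M$ and the comparison of $\Delta_M$ with $\Pi\Omega\Pi$ on the base.
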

The strategy is to decompose $L^2(SM)$ into eigenspaces of a Casimir operator constructed in Proposition \ref{prop:Cas} and then use the Schur complement formula on each eigenspace. For readers' convenience we give a brief review of the Schur complement formula.

\subsection{Schur Complement formula}
Schur's complement formula is a convenient tool to analyze the degeneracy of an operator.
\begin{lem}\label{lem:Schur}
Suppose 
\begin{align}\label{2.2.1}
\begin{pmatrix}
P&R_-\\
R_+&R_{+-}
\end{pmatrix}
=
\begin{pmatrix}
E&E_+\\
E_-&E_{-+}
\end{pmatrix}^{-1}:X_1\times X_-\to X_2\times X_+  
\end{align}
are bounded operators on Banach spaces, then $P$ is invertible if and only if $E_{-+}$ is invertible.
Moreover, in such case we have
\begin{align}\label{Grushin}
    P^{-1}=E-E_+E_{-+}^{-1}E_-,\quad E_{-+}^{-1}=R_{+-}-R_+P^{-1}R_-.
\end{align}
\end{lem}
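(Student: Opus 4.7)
The plan is to expand the hypothesis $\begin{pmatrix} P & R_- \\ R_+ & R_{+-} \end{pmatrix}\begin{pmatrix} E & E_+ \\ E_- & E_{-+} \end{pmatrix} = I$ and its companion in the reverse order into the eight block-component identities, and then perform the standard block Gaussian elimination to kill the off-diagonal terms in the second factor. Reading off the components, the ones that will do all the work are
\begin{align*}
PE + R_- E_- &= I, & PE_+ + R_- E_{-+} &= 0, \\
EP + E_+ R_+ &= I, & E_- P + E_{-+} R_+ &= 0, \\
R_+ E_+ + R_{+-} E_{-+} &= I, & E_- R_- + E_{-+} R_{+-} &= I.
\end{align*}

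For the implication ``$E_{-+}$ invertible $\Rightarrow$ $P$ invertible with $P^{-1}=E-E_+E_{-+}^{-1}E_-$'', I would first use $PE_+=-R_-E_{-+}$ to compute
\[ P(E-E_+E_{-+}^{-1}E_-) = PE - (PE_+)E_{-+}^{-1}E_- = PE + R_-E_- = I, \]
and then use $E_-P=-E_{-+}R_+$ to get the other-sided identity $(E-E_+E_{-+}^{-1}E_-)P = EP + E_+R_+ = I$. Together these give the two-sided inverse.

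For the converse ``$P$ invertible $\Rightarrow$ $E_{-+}$ invertible with $E_{-+}^{-1}=R_{+-}-R_+P^{-1}R_-$'', I would solve $PE_+=-R_-E_{-+}$ for $E_+=-P^{-1}R_-E_{-+}$ and substitute into $R_+E_++R_{+-}E_{-+}=I$ to obtain $(R_{+-}-R_+P^{-1}R_-)E_{-+} = I$; symmetrically, using $E_-P=-E_{-+}R_+$ solved as $E_-=-E_{-+}R_+P^{-1}$ plugged into $E_-R_-+E_{-+}R_{+-}=I$ gives $E_{-+}(R_{+-}-R_+P^{-1}R_-)=I$.

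There is no real obstacle: the argument uses only composition, addition, and the cancellations $E_{-+}E_{-+}^{-1}=I$ (resp.\ $PP^{-1}=I$), so it is insensitive to whether the spaces are finite dimensional and is valid verbatim for bounded operators between Banach spaces. The only mild point worth remarking is that both one-sided inverses are available at essentially no extra cost because both orders of the matrix product are assumed, which is why we obtain two-sided invertibility rather than just one-sided.
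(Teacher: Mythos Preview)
Your proof is correct and follows essentially the same direct verification as the paper: expand the block identities and eliminate $R_-$ (resp.\ $E_{-+}$) to produce the two-sided inverse formulas. You even spell out the converse direction that the paper dismisses with ``similar,'' so there is nothing to add.
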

\begin{proof}
The proof is direct. If $E_{-+}$ is invertible, then from
\begin{align*}
    PE+R_-E_-=I,\quad PE_++R_-E_{-+}=0,
\end{align*}
we get
$PE-PE_+E_{-+}^{-1}E_-=I$.
Similarly, since
\begin{align*}
    EP+E_+R_+=I,\quad E_-P+E_{-+}R_+=0,
\end{align*}
we get
$EP-E_+E_{-+}^{-1}E_-P=I$.
We conclude that $P$ is invertible and $P^{-1}=E-E_+E_{-+}^{-1}E_-$. The proof for the other direction is similar.
\end{proof} 
This lemma will be used for the $R_{+-}=0$ case, which we call a Grushin problem for $P$.
\subsection{Decomposition of \texorpdfstring{$L^2(SM)$}{L\^{}2(SM)}}\label{sec:decomp}
Fix a Casimir operator $\Omega$ on $SM$ constructed in Proposition~\ref{prop:Cas}, and as before let $\Delta_V$ be the vertical Laplacian on $SM$. Since $\Omega$, $\Delta_V$ and the total Laplacian $\Delta$ on $SM$ commute with each other, we can do spectral decomposition on each eigenspace of $\Delta$. Thus we get the following orthogonal decomposition:
\begin{align}\label{decomposition}
    L^2(SM)=\bigoplus\limits_\eta V_\eta =\bigoplus\limits_{\eta,k}V_{\eta,k}
\end{align}
where $V_\eta=\{u\in L^2(SM): \Omega u=\eta u\}$ is the $\eta$-eigenspace of $\Omega$ and $V_{\eta,k}=\{u\in V_\eta: \Delta_V u=\lambda_k u\}$ is the $k$-th eigenspace of $\Delta_V$ in $V_\eta$. Here
$\lambda_k=k(k+n-2)$,  $k=0,1,2,\cdots$ are the eigenvalues of the spherical Laplacian. For sufficiently large constant $C>0$, $\Omega+ C\Delta_V$ is elliptic thanks to Proposition \ref{prop:Cas}, so each $V_{\eta,k}$ is finite dimensional and consists of smooth functions.

The operator $\Delta_M:H^2(M)\to L^2(M)$ a priori only acts on $M$. But we will also let it act on $SM$ via $\Pi\Delta_M\Pi:H^2(SM)\to L^2(SM)$. We will also use the notation $(\Delta_M-\lambda)^{-1}$ to abbreviate $\Pi(\Delta_M-\lambda)^{-1}\Pi:L^2(SM)\to L^2(SM)$, as we have done in the statement of Theorem~\ref{thm:Conv}.

We will do analysis for $P_\gamma=-\gamma X+c_n\gamma^2 \Delta_V$ (where $c_n=1/n(n-1)$) on each eigenspace $V_\eta$. The advantage is that 
$P_\gamma=P_\gamma+(\Omega-\eta)/C:V_\eta\cap H^2 \to V_\eta$ is an elliptic operator. Let $\Pi_{\eta,k}: V_\eta\to V_{\eta,k}$ denote the orthogonal projection with the abbreviated notation $\Pi=\Pi_{\eta,0}:V_\eta\to V_{\eta,0}$ and $\Pi^\perp=\id-\Pi: V_\eta\to V_{\eta,>0}$. 

\begin{lem}
The geodesic vector field
$X$ satisfies the following properties.
\begin{itemize}
    \item $X$ is anti-self-adjoint with respect to the natural $L^2(SM)$ norm defined via the metric;
    \item $X$ sends $V_{\eta,k}$ into $V_{\eta,k+1}\oplus V_{\eta,k-1}$
with the convention that $V_{\eta,-1}=0$;
\item $n\Pi X^2\Pi=-\Delta_M$.
\end{itemize}
\end{lem}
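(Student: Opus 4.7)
These three items are essentially independent, so the plan is to treat them one at a time.

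For (1), I would invoke Liouville's theorem: the geodesic flow on $SM$ preserves the Liouville measure, which coincides with the Riemannian volume measure used to define $L^2(SM)$. Equivalently, $X$ is divergence-free, so $\int_{SM} X(fg)\, d\mathrm{Vol}_{SM} = 0$ for $f, g \in C^\infty(SM)$. Expanding via the Leibniz rule gives $\langle Xf, g\rangle + \langle f, Xg\rangle = 0$, hence $X^\ast = -X$ on smooth functions, and the result extends to the natural domain by density.

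For (2), I would work pointwise in geodesic normal coordinates centered at an arbitrary $p_0 \in M$. At $p_0$, the Christoffel symbols and first metric derivatives vanish, so the horizontal lift $\bar\partial_i$ agrees with the coordinate derivative $\partial_{x^i}$ on $SM$, giving $Xu|_{(p_0,v)} = \sum_i v^i (\partial_{x^i} u)(p_0, v)$. Furthermore, $\Delta_V$ at $p_0$ is the standard round-sphere Laplacian on $S_{p_0}M$, and $[\partial_{x^i}, \Delta_V]|_{p_0} = 0$ because its only contribution stems from the derivatives $\partial_{x^i} g_{jk}(p_0)$, which vanish in normal coordinates. Consequently, if $u \in V_{\eta, k}$ so that $(\Delta_V u)(p_0, \cdot) = \lambda_k u(p_0, \cdot)$, then $(\partial_{x^i} u)(p_0, \cdot)$ is also a degree-$k$ spherical harmonic on $S_{p_0}M$. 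Since each $v^i$ restricts to a degree-$1$ spherical harmonic, the classical decomposition of a degree-$(k+1)$ polynomial on $\mathbb{R}^n$ into harmonic components (writing $v^i Y_k = H_{k+1} + |v|^2 H_{k-1}$ and restricting to $|v|=1$) implies that $(Xu)|_{S_{p_0}M}$ is a sum of spherical harmonics of degrees $k+1$ and $k-1$. Since $p_0$ was arbitrary, $Xu$ lies in the sum of the $\lambda_{k+1}$- and $\lambda_{k-1}$-eigenspaces of $\Delta_V$ globally. Combining this with $Xu \in V_\eta$ (from $[X,\Omega]=0$ in Proposition~\ref{prop:Cas}(2)) gives $Xu \in V_{\eta, k+1} \oplus V_{\eta, k-1}$.

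For (3), I would compute $X^2 f$ for $f \in C^\infty(M)$ viewed as a function on $SM$. For a geodesic $\gamma(t)$ with $\gamma(0)=p$ and $\dot\gamma(0) = v \in S_pM$, one has $X^2 f(p,v) = \frac{d^2}{dt^2}\big|_{t=0} f(\gamma(t)) = \mathrm{Hess}(f)_p(v,v)$, since $\gamma$ has zero covariant acceleration. The projection $\Pi$ amounts to fiberwise averaging against the normalized round measure on $S_p M$; using $\int_{S^{n-1}} v^i v^j\, d\sigma(v) = \frac{\omega_{n-1}}{n}\delta^{ij}$ (rotation invariance plus tracing) together with the sign convention $\Delta_M = -\mathrm{tr}(\mathrm{Hess})$ yields $\Pi X^2 \Pi f = -\Delta_M f / n$, proving the claim. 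The main subtlety, if any, lies in (2): carefully verifying $[\partial_{x^i}, \Delta_V]|_{p_0} = 0$ in normal coordinates, and using only fiber-by-fiber information to deduce a global statement about the $\Delta_V$-eigenspace decomposition of $Xu$.
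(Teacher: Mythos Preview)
Your proposal is correct and follows essentially the same approach as the paper: volume preservation of the geodesic flow for (1), a normal-coordinate computation at an arbitrary point for (2) and (3), together with the fact that multiplication by a linear functional sends degree-$k$ spherical harmonics into degrees $k\pm 1$. Your treatment is in fact slightly more careful than the paper's, since you explicitly justify (via $[\partial_{x^i},\Delta_V]|_{p_0}=0$ in normal coordinates) why $(\partial_{x^i}u)(p_0,\cdot)$ remains a degree-$k$ harmonic and you note that $[X,\Omega]=0$ is needed to keep $Xu$ inside $V_\eta$.
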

\begin{proof}
\begin{itemize}
    \item We identify $SM$ with the unit cotangent bundle $S^*M$ via the metric on $M$. The geodesic vector field on $S^*M$ is the Reeb vector field with respect to the natural contact $1$-form $\alpha$. Therefore $X$ preserves both $\alpha$ and $d\alpha$, thus preserves the volume form on $S^*M$. It follows that $X$ is anti-self-adjoint.
    \item We compute in local coordinates. Suppose we are in normal coordinates $\{x^i\}$ at $p\in M$, so that $g_{ij}(p)=\delta_{ij}$ and $\partial_kg_{ij}(p)=0$. Then at $p$, $X=\sum v^j\partial_{x^j}$ where $v^j$'s are the induced coordinates on $TM$, which restrict to linear functionals on each fiber. The claim follows from the fact that multiplying spherical harmonics of degree $k$ by linear functionals gives a combination of spherical harmonics in degree $k-1$ and $k+1$.
    \item Again we compute in normal coordinates at $p\in M$. Then $X^2=\sum v^i v^j\partial_{x^i}\partial_{x^j}$. $\Pi X^2\Pi$ corresponds to projecting $v^iv^j$ to $0$-th spherical harmonics. If $i\neq j$, then the projection is zero. If $i=j$, then the projection is given by
    \begin{align*}
       \frac{1}{\Vol(S^{n-1})}\int_{S^{n-1}}(v^i)^2d\sigma(v)=\frac{1}{n\Vol(S^{n-1})}\int_{S^{n-1}}\sum\limits_{i=1}^n(v^i)^2d\sigma(v)=\frac{1}{n}.
    \end{align*}
    Thus $\displaystyle\Pi X^2\Pi=\frac{1}{n}\sum\limits_{i=1}^n\partial^2_{x^i}=-\frac{1}{n}\Delta_M.$\qedhere
\end{itemize}
\end{proof}

\subsection{Invertibility lemma}
We first prove an invertibility lemma which will be useful in our analysis later. We will use the semiclassical notation $h=\gamma^{-1}$ and $\tilde{P}_h=c_n\Delta_V-hX$. Note that $P_\gamma=\gamma^2\tilde{P}_h$.

\begin{lem}\label{lem:inv_c}
For $|\lambda|\leq C_0$, $2C_0h^2\leq  c_n$, the operator
\begin{align}
    Q_0=\Pi^\perp \left(\tilde{P}_h-h^2\lambda\right)\Pi^\perp: \Pi^\perp V_\eta\cap H^2\to \Pi ^\perp V_\eta.
\end{align}
is invertible with inverse $Q_0^{-1}$ satisfying
\begin{align*}
    \|Q_0^{-1}\|_{L^2\to L^2}\leq \frac{2}{c_n}.
\end{align*}

\end{lem}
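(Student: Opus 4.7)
The plan is an energy (coercivity) estimate on $\Pi^\perp V_\eta$, exploiting the spectral gap of $\Delta_V$ together with the anti-self-adjointness of $X$ to show $Q_0$ is bijective with the claimed norm bound on its inverse.

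First I would observe that $\Delta_V$ preserves each $V_{\eta,k}$ and hence commutes with $\Pi^\perp$, and that on $\Pi^\perp V_\eta$ all its eigenvalues lie in $\{\lambda_k:k\geq 1\}$, so $\Delta_V\geq \lambda_1=n-1\geq 1$ (since $n\geq 2$). Then for any $u\in\Pi^\perp V_\eta\cap H^2$, using $\Pi^\perp u=u$ and the self-adjointness of $\Pi^\perp$,
\[
\langle Q_0 u,u\rangle = c_n\langle \Delta_V u,u\rangle - h\langle Xu,u\rangle - h^2\lambda\|u\|^2.
\]
The preceding lemma says $X$ is anti-self-adjoint, so $\langle Xu,u\rangle\in i\mathbb{R}$, and taking real parts kills the middle term:
\[
\Re\langle Q_0 u,u\rangle = c_n\langle \Delta_V u,u\rangle - h^2\Re(\lambda)\|u\|^2 \geq \bigl(c_n(n-1)-h^2 C_0\bigr)\|u\|^2 \geq \tfrac{c_n}{2}\|u\|^2,
\]
where the last step uses $c_n(n-1)\geq c_n$ and the hypothesis $2C_0h^2\leq c_n$. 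By Cauchy--Schwarz, $\|Q_0u\|_{L^2}\geq (c_n/2)\|u\|_{L^2}$, so $Q_0$ is injective with closed range and its inverse on that range satisfies the claimed bound.

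To upgrade this to surjectivity I would apply the exact same estimate to the formal adjoint $Q_0^*=\Pi^\perp(c_n\Delta_V+hX-h^2\bar\lambda)\Pi^\perp$, which has identical structure since the argument only sees $\Re(\lambda)$ and the antisymmetry of $X$. This yields $\|Q_0^*v\|\geq (c_n/2)\|v\|$ and hence $\ker Q_0^*=0$, so $(\operatorname{ran}Q_0)^\perp=\ker Q_0^*=0$; combined with closed range this gives surjectivity, and therefore $\|Q_0^{-1}\|_{L^2\to L^2}\leq 2/c_n$.

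The only step I would expect to require any real care is checking that the Hilbert space adjoint of $Q_0$ really agrees with this formal adjoint on $\Pi^\perp V_\eta\cap H^2$; this is an instance of the hypoelliptic $H^2$-regularity of $\tilde P_h$ on each $V_\eta$ that was already invoked in the discussion before the lemma (via ellipticity of $P_\gamma+(\Omega-\eta)/C$ on $V_\eta$). Aside from that regularity bookkeeping, the whole proof is the single coercivity computation above.
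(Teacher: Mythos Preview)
Your proof is correct and follows essentially the same route as the paper: both use the coercivity estimate $\Re\langle Q_0u,u\rangle\ge(c_n/2)\|u\|^2$ coming from $\Delta_V\ge\lambda_1\ge1$ on $\Pi^\perp V_\eta$ and the anti-self-adjointness of $X$, then handle surjectivity via the formal adjoint together with elliptic regularity on $V_\eta$. The paper phrases the surjectivity step as a contradiction argument (showing any $v\perp\operatorname{ran}Q_0$ is smooth by ellipticity and hence zero by the adjoint lower bound), but this is exactly the regularity bookkeeping you identify in your last paragraph.
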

\begin{proof}
First, for $u\in \Pi^\perp V_\eta \cap H^2$,
\begin{align*}
    \|Q_0 u\|_{L^2}\|u\|_{L^2}\geq \Re((c_n\Delta_V-hX-h^2\lambda )u,u)=(\left(c_n\Delta_V-h^2\Re \lambda\right)u,u)\geq \frac{c_n}{2}\|u\|_{L^2}^2,
\end{align*}
and hence we know $Q_0$ is injective, and the image is closed by ellipticity. 

Now suppose $Q_0$ is not surjective, then there exists a nonzero $v\in \Pi^\perp V_\eta$ such that
\begin{align*}
    (Q_0 u,v)=0,\text{ for any }u\in \Pi^\perp V_\eta\cap C^\infty.
\end{align*}
Then
$\left(u,\left(c_n\Delta_V+hX-h^2\bar{\lambda}\right)v\right)=0$ for all $u\in \Pi^\perp V_\eta\cap C^\infty$
and thus
$\Pi^\perp\left(c_n\Delta_V+hX-h^2\bar{\lambda}\right)v=0$
as a distribution. Notice
\begin{align*}
    (c_n\Delta_V+hX-h^2\bar{\lambda})v\in V_{\eta,0}\subset C^\infty.
\end{align*}
By ellipticity, $v\in C^\infty$ and this is a contradiction to injectivity.
\end{proof}

\subsection{Spectral convergence}
In this section we prove the convergence of the spectrum in Theorem~\ref{thm:conv_refined}.

Let $i_0: V_{\eta,0}\to V_\eta$ be the inclusion. We will consider the following Grushin problem for $P_\gamma-\lambda$.
\begin{align*}
    \begin{pmatrix}
    P_\gamma-\lambda &  \gamma i_0\\
    \gamma\Pi&0
    \end{pmatrix}:
    V_\eta\cap H^2 \oplus  V_{\eta,0}\to V_\eta\oplus V_{\eta,0}.
\end{align*}
We would like to use Lemma \ref{lem:Schur}, so let us check the operator is invertible. In other words, we want to solve the equations
\begin{align}\label{equ: grushin}
    \left\{\begin{array}{ll}
    (P_\gamma-\lambda) u+\gamma u_- =v,    &  \\
    \gamma\Pi u=v_+.     & 
    \end{array}\right.
\end{align}
Thanks to Lemma \ref{lem:inv_c}, we can explicitly solve the equation. From the second equation of \eqref{equ: grushin} one knows the $\Pi$ part of $u$. The $\Pi^\perp$ part of the first equation of \eqref{equ: grushin} then gives the $\Pi^\perp$ part of $u$. Plugging the solution $u$ back gives $u_-$. Hence the unique solution of \eqref{equ: grushin} is
\begin{align}\label{sol:grushin}
    \left\{\begin{array}{cl}
    u&=(\Pi^\perp (P_\gamma-\lambda)\Pi^\perp)^{-1}\Pi^\perp (v+X v_+)+\gamma^{-1} v_+,    \\
    u_-&= \gamma^{-1}\Pi v+\gamma^{-2}\lambda v_++\Pi X(\Pi^\perp (P_\gamma-\lambda)\Pi^\perp)^{-1}\Pi^\perp (v+Xv_+).
    \end{array}\right.
\end{align}
Now we can apply Lemma \ref{lem:Schur} and it follows that $\lambda\in \sigma(P_\gamma|_{V_\eta})$ if and only if $E_{-+}=\gamma^{-2} (\lambda +\Pi X(\Pi^\perp (\tilde{P}_h-h^2\lambda)\Pi^\perp)^{-1}X\Pi)$ is not invertible.
Let us analyze the asymptotic behavior of $E_{-+}$.
\begin{prop}
Fix $\eta\in \sigma(\Omega)$, then on the finite-dimensional space $V_{\eta,0}$
\begin{align*}
    \lambda +\Pi X(\Pi^\perp (\tilde{P}_h-h^2\lambda)\Pi^\perp)^{-1}X\Pi \to \lambda-\Delta_M
\end{align*}
locally uniformly in $\lambda$ on $V_{\eta,0}$, as $h\to 0$.
\end{prop}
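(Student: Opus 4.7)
The plan is to exploit the shifting property: for any $v\in V_{\eta,0}$, $Xv\in V_{\eta,1}$ lies in a single $\Delta_V$-eigenspace with eigenvalue $\lambda_1=n-1$, so the vertical part of $Q_0:=\Pi^\perp(\tilde P_h-h^2\lambda)\Pi^\perp$ acts on $Xv$ as a scalar. Writing $Q_0=A_0-h\Pi^\perp X\Pi^\perp$ with $A_0:=\Pi^\perp(c_n\Delta_V-h^2\lambda)\Pi^\perp$, the operator $A_0$ acts on $V_{\eta,1}$ as $c_n\lambda_1-h^2\lambda=\tfrac{1}{n}-h^2\lambda$. Hence $A_0^{-1}Xv=\tfrac{n}{1-nh^2\lambda}Xv$ is explicit, and using the identity $n\Pi X^2\Pi=-\Delta_M$ established in Section~\ref{sec:decomp},
\begin{equation*}
\Pi XA_0^{-1}Xv=\tfrac{n}{1-nh^2\lambda}\Pi X^2v=-\tfrac{1}{1-nh^2\lambda}\Delta_M v\to-\Delta_M v.
\end{equation*}

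Next, I would show that replacing $A_0^{-1}$ by $Q_0^{-1}$ introduces only an $O(h)$ error when applied to $Xv$. By the resolvent identity $Q_0^{-1}-A_0^{-1}=Q_0^{-1}(A_0-Q_0)A_0^{-1}=hQ_0^{-1}\Pi^\perp X\Pi^\perp A_0^{-1}$, applied to $Xv$,
\begin{equation*}
Q_0^{-1}Xv-A_0^{-1}Xv=\tfrac{nh}{1-nh^2\lambda}Q_0^{-1}\Pi^\perp X^2v.
\end{equation*}
Since $V_{\eta,0}$ is a finite-dimensional space of smooth functions, $\|X^2v\|_{L^2}\lesssim\|v\|_{L^2}$; combined with the uniform bound $\|Q_0^{-1}\|_{L^2\to L^2}\leq 2/c_n$ from Lemma~\ref{lem:inv_c}, this gives $\|Q_0^{-1}Xv-A_0^{-1}Xv\|_{L^2}=O(h)\|v\|_{L^2}$ uniformly for $|\lambda|$ bounded.

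Finally, apply $\Pi X$ and note that $\Pi X:\Pi^\perp V_\eta\to V_{\eta,0}$ is a bounded operator: by the shifting property $\Pi X$ annihilates every $V_{\eta,k}$ with $k\neq1$, and $\Pi X|_{V_{\eta,1}}$ is the adjoint of $-X|_{V_{\eta,0}}$, which is bounded because $V_{\eta,0}$ is finite-dimensional. Thus
\begin{equation*}
\Pi XQ_0^{-1}X\Pi=-\tfrac{\Delta_M}{1-nh^2\lambda}+O(h)\quad\text{on }V_{\eta,0},
\end{equation*}
locally uniformly in $\lambda$, which yields the desired convergence. The main subtlety is that $X$ is unbounded on $L^2$; each appearance of $X$ around $Q_0^{-1}$ must be absorbed either into the finite-dimensional space $V_{\eta,0}$ (controlling $\Pi X$ and $X^2v$) or into the single $\Delta_V$-eigenspace $V_{\eta,1}$ (on which $A_0^{-1}$ is a known scalar), and only this lucky structure makes the needed estimates uniform without invoking a full Neumann expansion.
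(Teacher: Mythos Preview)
Your argument is correct and follows essentially the same route as the paper: use the resolvent identity to compare $Q_0^{-1}$ with the inverse of the diagonal part on $V_{\eta,1}$, exploit that $X\Pi$ lands in $V_{\eta,1}$ and that $V_{\eta,0}$ is finite-dimensional to make all $X$'s bounded, and then invoke $n\Pi X^2\Pi=-\Delta_M$. The only cosmetic differences are that the paper takes $\Pi^\perp c_n\Delta_V\Pi^\perp$ (rather than your $A_0=\Pi^\perp(c_n\Delta_V-h^2\lambda)\Pi^\perp$) as the reference operator, and phrases the estimate as $\Pi_{\eta,1}Q_0^{-1}\Pi_{\eta,1}\to n\Pi_{\eta,1}$ in operator form rather than vectorwise.
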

\begin{proof}
We have the estimate for the difference
\begin{align*}
    &\Pi_{\eta,1}(\Pi^\perp (\tilde{P}_h-h^2\lambda)\Pi^\perp)^{-1}\Pi_{\eta,1}-\Pi_{\eta,1}(\Pi^\perp (c_n\Delta_V)\Pi^\perp)^{-1}\Pi_{\eta,1}\\
    =\,&\Pi_{\eta,1}(\Pi^\perp (\tilde{P}_h-h^2\lambda)\Pi^\perp)^{-1}(h\Pi_{\eta,2}X+h^2\lambda)(\Pi^\perp (c_n\Delta_V)\Pi^\perp)^{-1}\Pi_{\eta,1}\\
    =\,& n\Pi_{\eta,1}(\Pi^\perp (\tilde{P}_h-h^2\lambda)\Pi^\perp)^{-1}(h\Pi_{\eta,2}X+h^2\lambda)\Pi_{\eta,1}.
\end{align*}
Since
$(\Pi^\perp (\tilde{P}_h-h^2\lambda)\Pi^\perp)^{-1},\Pi_{\eta,2}X, \Pi_{\eta,1}, \lambda$
are all uniformly bounded for $\lambda$ in a compact set, we see as $h\to 0$,
\begin{align*}
    \Pi_{\eta,1}(\Pi^\perp (\tilde{P}_h-h^2\lambda)\Pi^\perp)^{-1}\Pi_{\eta,1}\to \Pi_{\eta,1}(\Pi^\perp (c_n\Delta_V)\Pi^\perp)^{-1}\Pi_{\eta,1}=n\Pi_{\eta,1}
\end{align*}
locally uniformly. Note $V_{\eta,0}$ is finite dimensional, we have
\[
    \lambda +\Pi X(\Pi^\perp (\tilde{P}_h-h^2\lambda)\Pi^\perp)^{-1}X\Pi\to \lambda +n\Pi X^2\Pi=\lambda-\Delta_M.\qedhere
\]
\end{proof}
As a corollary, we get
\begin{cor}\label{cor:spec_conv}
For each $\eta$,
$\sigma(P_\gamma|_{V_\eta})\to \sigma(\Delta_M|_{V_{\eta,0}})$ locally uniformly
as $\gamma\to \infty$.\qed
\end{cor}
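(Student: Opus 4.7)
The plan is to deduce the corollary directly from the preceding proposition via the Schur complement formalism already set up. By Lemma~\ref{lem:Schur} applied to the Grushin problem~\eqref{equ: grushin}, a point $\lambda$ lies in $\sigma(P_\gamma|_{V_\eta})$ if and only if the finite-dimensional operator
\begin{equation*}
F_h(\lambda) := \lambda + \Pi X(\Pi^\perp(\tilde{P}_h-h^2\lambda)\Pi^\perp)^{-1}X\Pi
\end{equation*}
on $V_{\eta,0}$ fails to be invertible, since $E_{-+} = \gamma^{-2} F_h(\lambda)$. Hence $\sigma(P_\gamma|_{V_\eta}) \cap U = \{\lambda \in U : \det F_h(\lambda) = 0\}$ counted with multiplicities, for any open $U \Subset \mathbb{C}$.

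Next, I would observe that for any bounded open $U \Subset \mathbb{C}$, Lemma~\ref{lem:inv_c} provides uniform invertibility of $\Pi^\perp(\tilde{P}_h - h^2\lambda)\Pi^\perp$ for $\lambda \in U$ and $h$ sufficiently small. Since the map $\lambda \mapsto \tilde{P}_h - h^2\lambda$ is entire and operator inversion is holomorphic on the open set where it is defined, $F_h(\lambda)$ depends holomorphically on $\lambda \in U$. The preceding proposition then gives locally uniform convergence $F_h(\lambda) \to \lambda - \Delta_M|_{V_{\eta,0}}$ as $h \to 0$.

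Finally, the determinants $D_h(\lambda) := \det F_h(\lambda)$ form a family of holomorphic functions on $U$ converging locally uniformly to $D_0(\lambda) := \det(\lambda - \Delta_M|_{V_{\eta,0}})$. Applying Hurwitz's theorem on any $U \Subset \mathbb{C}$ whose boundary avoids $\sigma(\Delta_M|_{V_{\eta,0}})$, the zeros of $D_h$ in $U$ converge with multiplicity to those of $D_0$. This is precisely the locally uniform convergence $\sigma(P_\gamma|_{V_\eta}) \to \sigma(\Delta_M|_{V_{\eta,0}})$ claimed. The only subtlety is ensuring that Lemma~\ref{lem:inv_c} truly gives $\lambda$-holomorphic, not just pointwise, inverses uniformly in $h$ small on $U$; but this is immediate from the second resolvent-style estimate in that lemma, so there is no substantial obstacle beyond packaging the preceding proposition through Hurwitz.
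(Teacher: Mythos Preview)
Your proposal is correct and follows essentially the same route as the paper: the paper treats the corollary as immediate (hence the bare \qed), relying on the Schur complement reduction to the finite-dimensional operator $E_{-+}$ and the locally uniform convergence established in the preceding proposition. Your write-up just makes explicit the standard finishing step---taking determinants and invoking Hurwitz---that the paper leaves to the reader; the only minor inaccuracy is that Lemma~\ref{lem:inv_c} does not literally contain a ``second resolvent-style estimate,'' but its uniform bound $\|Q_0^{-1}\|\le 2/c_n$ is all you need to get holomorphic dependence on $\lambda$ via a Neumann series.
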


Note $L^2(SM)=\bigoplus V_\eta$ and $L^2(M)=\bigoplus V_{\eta,0}$, we would like to conclude that $\sigma(P_\gamma)\to \sigma(\Delta_M)$. However, it is unclear a priori whether the convergence is uniform in $\eta$.
We now prove the stronger convergence in Theorem~\ref{thm:conv_refined} using contradiction. It suffices to prove the following proposition.
\begin{prop}\label{prop: contra}
For any $C_0>0$, there exists $C_1=C_1(C_0)>0$ such that for any $\gamma>C_1$ and $|\eta|>C_1$ we have
\begin{align*}
    \sigma(P_\gamma|_{V_\eta})\cap \{|\lambda|\leq C_0\}=\varnothing.
\end{align*}
\end{prop}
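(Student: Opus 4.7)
The plan is a contradiction argument. Suppose there exist sequences $\gamma_j \to \infty$, $\eta_j$ with $|\eta_j| \to \infty$, $\lambda_j$ with $|\lambda_j| \leq C_0$, and unit vectors $u_j \in V_{\eta_j}$ satisfying $(P_{\gamma_j} - \lambda_j)u_j = 0$. Write $u_j = \sum_{k \geq 0} u_j^{(k)}$ with $u_j^{(k)} := \Pi_{\eta_j, k} u_j$. Since $X$ is antisymmetric and $\Delta_V$ is nonnegative with eigenvalue $\lambda_k = k(k+n-2)$ on $V_{\eta_j, k}$, taking the real part of $\langle P_{\gamma_j} u_j, u_j\rangle = \lambda_j$ yields the energy estimate $\|u_j^{(k)}\|^2 \leq C_0/(c_n \gamma_j^2 \lambda_k)$ for $k \geq 1$. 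In particular $\|u_j^{(0)}\| \to 1$, so $V_{\eta_j, 0} \neq 0$ for all large $j$.

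Since $\Omega$ commutes with $\Delta_V$, it preserves $\ker \Delta_V$, which is identified with $L^2(M)$ via pullback. The restriction $\Omega_0 := \Omega|_{L^2(M)}$ is a well-defined self-adjoint second-order differential operator on $M$ whose principal symbol is negative definite by condition (1) of Proposition~\ref{prop:Cas}. Hence $-\Omega_0$ is self-adjoint elliptic on the compact $M$, bounded below, so its eigenvalues $-\eta_j$ are bounded below, forcing $\eta_j \to -\infty$. Moreover, $\sigma(-\Omega_0)$ and $\sigma(\Delta_M)$ are two continuous positive-definite quadratic forms on the compact cosphere bundle of $M$, hence comparable, and Garding's inequality gives constants $c, C > 0$ with
\[\langle \Delta_M v, v\rangle \geq c\langle -\Omega_0 v, v\rangle - C\|v\|^2, \quad v \in L^2(M).\]
Applied to $u_j^{(0)}$, which satisfies $\Omega_0 u_j^{(0)} = \eta_j u_j^{(0)}$, this yields the key lower bound $\langle \Delta_M u_j^{(0)}, u_j^{(0)}\rangle \geq c(-\eta_j) - C \to \infty$.

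The heart of the proof is an approximate eigenvalue equation for $\Delta_M$ on $u_j^{(0)}$. Projecting $P_{\gamma_j} u_j = \lambda_j u_j$ onto $V_{\eta_j, 0}$ and $V_{\eta_j, 1}$ and using $X\colon V_{\eta, k}\to V_{\eta, k-1}\oplus V_{\eta, k+1}$ gives respectively
\[-\gamma_j \Pi X u_j^{(1)} = \lambda_j u_j^{(0)}, \quad (c_n(n-1)\gamma_j^2 - \lambda_j) u_j^{(1)} = \gamma_j(X u_j^{(0)} + \Pi_{\eta_j, 1} X u_j^{(2)}).\]
Solving the second for $u_j^{(1)}$, substituting into the first, and using $c_n(n-1) = 1/n$ together with $n\Pi X^2 \Pi = -\Delta_M$, one gets $\Delta_M u_j^{(0)} = \lambda_j u_j^{(0)} + E_j$. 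The error $E_j$ is estimated using the bound $\|X u\|_{L^2} \lesssim \sqrt{-\eta_j + \lambda_k + 1}\,\|u\|_{L^2}$ for $u \in V_{\eta_j, k}$ (from Garding applied to the fully elliptic operator $-\Omega + C\Delta_V$ for $C$ large, which is elliptic by condition (1) of Proposition~\ref{prop:Cas}) together with $\|u_j^{(2)}\| \lesssim 1/\gamma_j$ from the energy estimate; term-by-term counting gives $\|E_j\|_{L^2} = O(-\eta_j/\gamma_j)$.

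Taking the real inner product with $u_j^{(0)}$ gives $\langle \Delta_M u_j^{(0)}, u_j^{(0)}\rangle \leq |\lambda_j| + \|E_j\| \leq C_0 + O(-\eta_j/\gamma_j)$. Combined with the lower bound from the previous paragraph, this gives $(c - O(\gamma_j^{-1}))(-\eta_j) \leq C_0 + C$, which bounds $-\eta_j$ uniformly for $\gamma_j$ large, contradicting $\eta_j \to -\infty$. The main obstacle is obtaining the sharp error bound $\|E_j\| = O(-\eta_j/\gamma_j)$: operator norms of $X$ on $V_{\eta_j}$ grow like $\sqrt{-\eta_j}$, so each $X$ factor in the substituted expression must be paired carefully with a factor of $\gamma_j^{-1}$ coming from the higher-harmonic smallness; otherwise a worse $O(\sqrt{-\eta_j})$ error would swamp the $C_0$ bound on $\lambda_j$ and destroy the contradiction.
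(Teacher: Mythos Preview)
Your proof is correct and follows essentially the same approach as the paper: both use the real-part energy estimate to show $u^{(0)}$ dominates, project the eigenvalue equation onto the first two spherical-harmonic levels, and then combine the ellipticity of $\Pi\Omega\Pi$ on $M$ (giving $\langle\Delta_M u^{(0)},u^{(0)}\rangle\gtrsim|\eta|$) with the ellipticity of $\Omega+C\Delta_V$ on $SM$ (controlling the $X\Pi_{\eta,2}u$ cross term) to force $|\eta|\lesssim 1$. One cosmetic point: with the paper's symbol convention (where $\Delta_M$ itself has \emph{negative} definite symbol), $\Omega_0=\Pi\Omega\Pi$ is nonnegative, so in fact $\eta_j\to+\infty$; replacing $-\eta_j$ by $|\eta_j|$ throughout fixes this without changing the argument.
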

\begin{proof}
We proceed by contradiction. Suppose there exists $u\in V_\eta$ with $\|u\|_{L^2}=1$ such that for some $|\lambda|\leq C_0$,
\begin{align}\label{eigenequ}
    P_\gamma u = \lambda u.
\end{align}
Then
$c_n\gamma^2(\Delta_V u,u)=\Re(P_\gamma u,u) =\Re(\lambda)\leq C_0$
and thus
\begin{align}\label{3.6}
    \|\Pi_{\eta,>0} u\|_{L^2}^2\leq c_n^{-1} C_0\gamma^{-2},\quad \|\Pi_{\eta,0}u\|_{L^2}= 1-\Ocal(\gamma^{-2}).
\end{align}
Let us compare the $\Pi_{\eta,1}$ component of \eqref{eigenequ}.
\begin{align}\label{pi1eigen}
    \frac{\gamma^2}{n}\Pi_{\eta,1}u-\gamma \Pi_{\eta,1}Xu=\lambda\Pi_{\eta,1}u.
\end{align}
By \eqref{3.6}, $\|\Pi_{\eta,1}u\|_{L^2}\leq c_n^{-1/2}C_0^{1/2}\gamma^{-1}$ for the first term on the left. The second term decomposes as $\Pi_{\eta,1}Xu=\Pi_{\eta,1}X\Pi_{\eta,0}u+\Pi_{\eta,1}X\Pi_{\eta,2}u$. We make use of the operator $\Pi\Omega\Pi$ to estimate each of the two terms. By the construction in Proposition \ref{prop:Cas}, $\Pi\Omega\Pi$ is a nonnegative self-adjoint second order elliptic operator on $M$ which commutes with $\Delta_M$. 
In particular, $\eta$ is nonnegative whenever $V_{\eta,0}\neq 0$. Therefore we have the following estimates.
\begin{align*}
    &\|\Pi_{\eta,1}X\Pi_{\eta,0}u\|_{L^2}^2=-(\Pi_{\eta,0}X^2\Pi_{\eta,0}u,u)=\frac{1}{n}(\Delta_M \Pi_{\eta,0}u,\Pi_{\eta,0}u)\\
    &=\frac{1}{n}(\Delta_M(\Pi\Omega\Pi)^{-1}\Omega \Pi_{\eta,0}u,\Pi_{\eta,0}u)
    \gtrsim |\eta|\|\Pi_{\eta,0}u\|_{L^2}^2\gtrsim |\eta|.
\end{align*}
\begin{align*}
    \|\Pi_{\eta,1}X\Pi_{\eta,2}u\|_{L^2}^2\lesssim \|\Pi_{\eta,2}u\|_{H^1}^2\lesssim ((\Omega+C) \Pi_{\eta,2}u,\Pi_{\eta,2}u)\lesssim (|\eta|+1)\|\Pi_{\eta,2}u\|_{L^2}^2\lesssim \gamma^{-1}(|\eta|+1).
\end{align*}
For $\gamma$ large enough, we get $|\eta|\lesssim 1$ from \eqref{pi1eigen}.
Taking $C_1$ large enough gives a contradiction.
\end{proof}
Now we can finish the proof of spectrum convergence \eqref{spec_conv}. Fix a region $U\Subset\CC$, by Proposition \ref{prop: contra}, for $\gamma\to \infty$, we only need to consider $|\eta|\leq C_1$ for some constant $C_1$. If $V_{\eta,0}=0$, then $V_\eta=\Pi^\perp V_\eta$ and by Lemma \ref{lem:inv_c} $P_\gamma|_{V_\eta}$ has no eigenvalue in $U$. We are left with the case $V_{\eta,0}\neq 0$. Note the set $\{\eta:|\eta|\leq C_1,V_{\eta,0}\neq 0\}$ is finite as $\Pi\Omega\Pi$ is elliptic. Then \eqref{spec_conv} follows from Corollary \ref{cor:spec_conv}.

\subsection{The resolvant convergence}
We now prove the resolvant convergence \eqref{res_conv} in the Theorem~\ref{thm:conv_refined}. Recall from the Grushin problem \eqref{Grushin} we have
\begin{align}
    (P_\gamma-\lambda)^{-1}&=E-E_+E_{-+}^{-1}E_-
\end{align}
where by \eqref{sol:grushin},
\begin{align*}
    E&=(\Pi^\perp (P_\gamma-\lambda)\Pi^\perp)^{-1}\Pi^\perp,
    &E_+&=(\Pi^\perp (P_\gamma-\lambda)\Pi^\perp)^{-1}\Pi^\perp X+\gamma^{-1},\\
   E_-&=\gamma^{-1}\Pi+\Pi X(\Pi^\perp (P_\gamma-\lambda)\Pi^\perp)^{-1}\Pi^\perp,
   &E_{-+}&=\gamma^{-2}( \lambda +\Pi X(\Pi^\perp (\tilde{P}_h-h^2\lambda)\Pi^\perp)^{-1}X\Pi).
\end{align*}
On each $V_\eta$, by Lemma \ref{lem:inv_c},
\begin{align*}
    \|E\|_{L^2\to L^2}\lesssim \gamma^{-2}, \|E_+-\gamma^{-1}\|_{L^2\to L^2}\lesssim_\eta \gamma^{-2}, \|E_--\gamma^{-1}\Pi\|_{L^2\to L^2}\lesssim_\eta \gamma^{-2}.
\end{align*}
This implies on each $V_\eta$,
\begin{align}\label{res_conv_eta}
    (P_\gamma-\lambda)^{-1}\to (\Delta_M-\lambda)^{-1},\quad \gamma\to \infty.
\end{align}
To get a uniform estimate for all $\eta$, it suffices to establish the following proposition as before. 
\begin{prop}\label{prop:res}
For any $C_0>0$, $\epsilon>0$, there exists $C_1=C_1(C_0,\epsilon)>0$ such that for any $|\lambda|\leq C_0$, $|\eta|>C_1$, $\gamma>C_1$ we have on $V_\eta$
\begin{align*}
    \|(P_\gamma-\lambda)^{-1}\|_{L^2\to L^2}\leq \epsilon,\quad \|(\Delta_M-\lambda)^{-1}\|_{L^2\to L^2}\leq \epsilon.
\end{align*}
\end{prop}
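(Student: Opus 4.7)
The plan is to prove the two estimates separately. The bound on $(\Delta_M-\lambda)^{-1}$ follows quickly from elliptic comparison, while the bound on $(P_\gamma-\lambda)^{-1}$ is a quantitative refinement of Proposition~\ref{prop: contra}.

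For the first bound, $(\Delta_M-\lambda)^{-1}$ vanishes on $L^2(M)^\perp\supset V_{\eta,>0}$ by convention, so only its action on $V_{\eta,0}$ matters. The commuting self-adjoint second-order elliptic operators $\Pi\Omega\Pi$ and $\Delta_M$ admit a joint orthonormal eigenbasis of $L^2(M)$. On a joint eigenvector with $\Pi\Omega\Pi$-eigenvalue $\eta$ and $\Delta_M$-eigenvalue $\mu$, the boundedness $\Pi\Omega\Pi\colon H^2\to L^2$ gives $|\eta|\,\|u\|\leq C\|u\|_{H^2}$, while ellipticity of $\Delta_M$ gives $\|u\|_{H^2}\leq C(|\mu|+1)\|u\|$; hence $|\mu|\gtrsim|\eta|$ for $|\eta|$ large. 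Then $\|(\Delta_M-\lambda)^{-1}\|_{V_\eta}\leq (c|\eta|-C_0)^{-1}<\epsilon$ once $C_1$ is chosen large enough.

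For the second bound, set $f=(P_\gamma-\lambda)u$ with $u\in V_\eta$ and aim for $\|u\|\leq \epsilon\|f\|$. Taking the real part of $\langle(P_\gamma-\lambda)u,u\rangle$ and using antisymmetry of $X$ gives the energy estimate
\[\sum_{k\geq 1}\lambda_k\|\Pi_{\eta,k}u\|^2\leq \tfrac{1}{c_n\gamma^2}\bigl(\|f\|\,\|u\|+C_0\|u\|^2\bigr),\]
which controls $\|\Pi_{\eta,k}u\|^2\lesssim \gamma^{-2}(\|f\|\,\|u\|+\|u\|^2)$ for each $k\geq 1$. Next I would project the equation onto $V_{\eta,1}$, use $\Delta_V|_{V_{\eta,1}}=n-1$, and solve for
\[\Pi_{\eta,1}X\Pi_{\eta,0}u=(c_n(n-1)\gamma-\lambda\gamma^{-1})\Pi_{\eta,1}u-\gamma^{-1}\Pi_{\eta,1}f-\Pi_{\eta,1}X\Pi_{\eta,2}u.\]
The upper bound $\|\Pi_{\eta,1}X\Pi_{\eta,2}u\|^2\lesssim(|\eta|+1)\|\Pi_{\eta,2}u\|^2$ and the lower bound $\|\Pi_{\eta,1}X\Pi_{\eta,0}u\|^2\gtrsim|\eta|\,\|\Pi_{\eta,0}u\|^2$, both already in Proposition~\ref{prop: contra}, combine with the energy estimate to yield
\[|\eta|\,\|\Pi_{\eta,0}u\|^2\lesssim \bigl(1+(|\eta|+1)\gamma^{-2}\bigr)\bigl(\|f\|\,\|u\|+\|u\|^2\bigr)+\gamma^{-2}\|f\|^2.\]
Splitting into the regimes $\gamma^2\geq|\eta|+1$ and $\gamma^2<|\eta|+1$, writing $\|u\|^2=\|\Pi_{\eta,0}u\|^2+\|\Pi_{\eta,>0}u\|^2$ and absorbing $\|\Pi_{\eta,>0}u\|^2$ via the energy estimate, and applying AM--GM to the cross term $\|f\|\,\|u\|$, I expect to arrive at $\|u\|^2\lesssim(|\eta|^{-2}+\gamma^{-4}+|\eta|^{-1}\gamma^{-2})\|f\|^2$, which gives the desired bound for $|\eta|,\gamma>C_1$ large enough.

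The main obstacle is that $|\eta|$ and $\gamma$ cannot be treated independently: the cross term $\|\Pi_{\eta,1}X\Pi_{\eta,2}u\|^2$ carries a factor of $|\eta|$ that is only dominated after the $\gamma^{-2}$ suppression from the energy estimate, so neither $\gamma\to\infty$ alone nor $|\eta|\to\infty$ alone would suffice. The argument therefore naturally splits into the two regimes above, and the threshold $C_1$ depends on $\epsilon$ in a coupled way. Once this case split is in place, the rest is elementary absorption, and the heart of the proof is really the two key inequalities on $\|\Pi_{\eta,1}X\Pi_{\eta,0}u\|$ and $\|\Pi_{\eta,1}X\Pi_{\eta,2}u\|$ inherited from Proposition~\ref{prop: contra}.
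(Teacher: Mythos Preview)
Your proposal is correct and follows the same strategy as the paper: the $(\Delta_M-\lambda)^{-1}$ bound comes from the elliptic comparison $\Delta_M\gtrsim\Pi\Omega\Pi$ on $V_{\eta,0}$, and the $(P_\gamma-\lambda)^{-1}$ bound comes from the energy estimate (real part of the pairing), the projection of the equation to $V_{\eta,1}$, and the two key inequalities on $\|\Pi_{\eta,1}X\Pi_{\eta,0}u\|$ and $\|\Pi_{\eta,1}X\Pi_{\eta,2}u\|$ already established in Proposition~\ref{prop: contra}. The only difference is organizational: the paper argues by contradiction (normalize $\|u\|=1$, assume $\|v\|<\epsilon^{-1}$, and deduce $|\eta|\lesssim_{\epsilon,C_0}1$), which sidesteps your regime split $\gamma^2\gtrless|\eta|+1$ and the AM--GM/absorption bookkeeping; your direct estimate, on the other hand, produces the explicit quantitative bound $\|u\|^2\lesssim(|\eta|^{-2}+\gamma^{-4}+|\eta|^{-1}\gamma^{-2})\|f\|^2$ at the cost of that extra work.
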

\begin{proof}
If $C_1\gg \epsilon^{-1}+C_0$ then we have on $V_{\eta,0}$
\begin{align*}
    \|(\Delta_M-\lambda)^{-1}\|_{L^2\to L^2}\leq \|(\Pi\Omega\Pi/C-\Re\lambda)^{-1}\|_{L^2\to L^2}\leq (|\eta|/C-|\lambda|)^{-1}\leq \epsilon.
\end{align*}
On the other hand, if 
$\|(P_\gamma-\lambda)^{-1}\|_{L^2\to L^2}\leq \epsilon$
does not hold on some $V_\eta$ with $|\eta|>C_1$, then there exists $u\in V_\eta\cap H^2$ with
\begin{align}\label{contraequ}
    P_\gamma u=\lambda u +v,\quad \|u\|_{L^2}=1, \|v\|_{L^2}<\epsilon^{-1}.
\end{align}
Pairing with $u$ and taking the real part, we get
$c_n\gamma^2(\Delta_Vu,u)=\Re(\lambda)+\Re(v,u)\leq C_0+\epsilon^{-1}$. Therefore, 
\begin{align*}
    \|\Pi_{\eta,>0} u\|_{L^2}^2\leq c_n^{-1}(C_0+\epsilon)\gamma^{-2},\quad \|\Pi_{\eta,0}u\|_{L^2}=1-\Ocal(\gamma^{-2}).
\end{align*}
As before we compare the $\Pi_{\eta,1}$ component of \eqref{contraequ}.
\begin{align}\label{pi1contra}
    \frac{\gamma^2}{n}\Pi_{\eta,1}u-\gamma\Pi_{\eta,1}Xu=\lambda\Pi_{\eta,1}u+\Pi_{\eta,1}v.
\end{align}
Similar to the proof of Proposition \ref{prop: contra}, we have $\|\Pi_{\eta,1}u\|_{L^2}\lesssim \gamma^{-1}$, $\|\Pi_{\eta,1}X\Pi_{\eta,0}u\|_{L^2}^2\gtrsim |\eta|$ and
$\|\Pi_{\eta,1}X\Pi_{\eta,2}u\|_{L^2}^2\lesssim \gamma^{-1}(|\eta|+1)$.
Moreover, $\|\Pi_{\eta,1}v\|_{L^2}\leq \|v\|_{L^2}< \epsilon^{-1}$. For $\gamma$ large enough, putting all these estimates into \eqref{pi1contra} we will get $|\eta|\lesssim_{\epsilon,C_0} 1$. Choosing a large enough $C_1$ gives a contradiction.
\end{proof}
Finally we can finish the proof of the resolvent convergence in \eqref{res_conv}. Fix a region $U\Subset\CC\setminus\sigma(\Delta_M)$. We need to prove for any $\epsilon>0$, there is $\gamma_0>0$ such that for all $\gamma>\gamma_0$,
\begin{align*}
    \|(P_\gamma-\lambda)^{-1}-(\Delta_M-\lambda)^{-1}\|_{L^2\to L^2}\leq \epsilon.
\end{align*}

Thanks to Proposition \ref{prop:res}, it suffices to consider $|\eta|\leq C_1$ for some constant $C_1=C_1(U,\epsilon)>0$. If $V_{\eta,0}=0$, then $V_\eta=\Pi^\perp V_\eta$ and by Lemma \ref{lem:inv_c}, $\|(P_\gamma-\lambda)^{-1}\|_{L^2\to L^2}\lesssim \gamma^{-2}$. We are left with the case $V_{\eta,0}\neq 0$ as before. Since the set $\{\eta:|\eta|\leq C_1, V_{\eta,0}\neq 0\}$ is finite, the claim follows from \eqref{res_conv_eta}.







\printbibliography








\end{document}